\documentclass[a4paper]{amsart}
\usepackage[T1]{fontenc}
\usepackage{mathptmx}
\usepackage{amsmath,amssymb,amsthm,amsbsy}
\usepackage{mathrsfs}
\usepackage{graphics,graphicx,color}
\usepackage{hyperref}
\usepackage{enumerate}
\usepackage{xcolor}
\usepackage{cleveref}
\usepackage{enumitem}

\usepackage{darkmode}
\disabledarkmode

\usepackage[abbrev]{amsrefs}

\usepackage{orcidlink}

\usepackage{setspace}

\theoremstyle{plain}
\newtheorem{lemma}{Lemma}[section]
\newtheorem{theorem}[lemma]{Theorem}

\theoremstyle{definition}

\newtheorem{example}[lemma]{Example}

\theoremstyle{remark}
\newtheorem{remark}[lemma]{Remark}

\DeclareMathOperator{\im}{im}

\DeclareMathOperator{\id}{id}

\DeclareMathOperator{\reg}{Reg}
\DeclareMathOperator{\Sym}{Sym}

\begin{document}

%%%%%%%%%%%%%%%%%%%%%%%%%%%%%%title%%%%%%%%%%%%%%%%%%%%%%%%%%%%%%%%%%%%%%%%%%%%%%%%%%%%%%%%%%%%%%%%%%%%%%%%%%%%%%%%%%%%%%%%%%%%%

\title[Maximal regular subsemigroups]{Maximal regular subsemigroups containing the group of units of transformation semigroups that preserve a uniform equivalence relation with two classes}

\author[K. Sangkhanan]{Kritsada Sangkhanan\orcidlink{0000-0002-1909-7514}}
\address{Department of Mathematics, Faculty of Science, Chiang~Mai~University, Chiang~Mai, 50200, Thailand}
\thanks{Corresponding author: Kritsada Sangkhanan (kritsada.s@cmu.ac.th). This research was supported by Chiang Mai University.}
\email{kritsada.s@cmu.ac.th}

%%%%%%%%%%%%%%%%%%%%%%%%%%%abstract%%%%%%%%%%%%%%%%%%%%%%%%%%%%%%%%%%%%%%%%%%%%%%%%%%%%%%%%%%%%%%%%%%%%%%%%%%%%%%%%%%%%%%%%%%%%%

\begin{abstract}
	Let $X$ be a nonempty set and $E$ an equivalence relation on $X$. The semigroup $T_E(X)$ consists of all transformations on $X$ that preserve $E$. Although the collection of all regular elements of $T_E(X)$ is not, in general, a subsemigroup, we investigate maximal regular subsemigroups of $T_E(X)$ containing the group of units $G_E(X)$. We construct two such maximal regular subsemigroups, denoted by $\mathcal{M}_1$ and $\mathcal{M}_2$, and show that their maximality holds in full generality for an arbitrary set $X$ and an arbitrary equivalence relation $E$ that is neither the identity relation nor the universal relation on $X$ (whether finite or infinite, uniform or non-uniform). Furthermore, when $X$ is finite and $E$ is a uniform equivalence relation with exactly two classes of cardinality at least $2$, we provide a complete classification by proving that they are the only maximal regular subsemigroups containing $G_E(X)$.
\end{abstract}
\subjclass[2020]{20M20, 20M17}
\keywords{transformation semigroup, equivalence relation, regular element, maximal regular subsemigroup, uniform partition, group of units}

%%%%%%%%%%%%%%%%%%%%%%%%%%%%title page%%%%%%%%%%%%%%%%%%%%%%%%%%%%%%%%%%%%%%%%%%%%%%%%%%%%%%%%%%%%%%%%%%%%%%%%%%%%%%%%%%%%%%%%%%%%%

\maketitle

%%%%%%%%%%%%%%%%%%%%%%%%%%%Introduction%%%%%%%%%%%%%%%%%%%%%%%%%%%%%%%%%%%%%%%%%%%%%%%%%%%%%%%%%%%%%%%%%%%%%%%%%%%%%%%%%%%%%%%%%%%%%

\section{Introduction}\label{sec:intro}

The study of algebraic semigroups of transformations has been a classical and active topic in semigroup theory for several decades (see, for instance, the foundational monographs by Clifford and Preston \cites{Clifford1961, Clifford1967} and Howie \cite{Howie1995}). Let $X$ be a nonempty set, and let $T(X)$ denote the full transformation semigroup on $X$ under composition of mappings. A fruitful research program in this area is the investigation of subsemigroups of $T(X)$ consisting of transformations that preserve certain structural relations or algebraic properties on $X$.

Among such subsemigroups, transformation semigroups preserving an equivalence relation have attracted considerable attention. In 1994, Pei \cite{Pei1994} introduced the semigroup
\[
	T_E(X) = \{\alpha \in T(X) : \forall x,y \in X,\, (x,y) \in E \implies (x\alpha, y\alpha) \in E\}
\]
of all transformations on $X$ preserving an equivalence relation $E$, showing that $T_E(X)$ is an $\alpha$-semigroup and the greatest transformation semigroup to which $E$ is suitable. Subsequently, Pei \cite{Pei2005} characterized Green's relations $\mathcal{L}$, $\mathcal{R}$, $\mathcal{H}$, and $\mathcal{D}$ on $T_E(X)$, provided sufficient conditions for $\mathcal{J}$-equivalence and a characterization in the finite-image case, and described the regular elements of $T_E(X)$, proving that $T_E(X)$ is regular if and only if $E$ is the identity or the universal relation on $X$. Later, Pei and Zhou \cite{Pei2011} investigated abundant semigroups of transformations preserving an equivalence relation, characterizing the equivalence relations $E$ for which $T_E(X)$ is abundant.

Various algebraic, order-theoretic, and topological aspects of $T_E(X)$ have since been extensively explored. Sun, Pei, and Cheng \cite{Sun2008} characterized the natural partial order on $T_E(X)$ and obtained sufficient conditions for left and right compatibility. They also characterized the maximal and minimal elements and proved the existence of upper and lower covers for non-maximal and non-minimal elements, respectively. This direction was further developed by Sun \cite{Sun2019}, who provided necessary and sufficient conditions for transformations in $T_E(X)$ to be left or right compatible with respect to the natural order. In another direction, Kaewnoi, Petapirak, and Chinram \cite{Kaewnoi2019} characterized left and right magnifying elements in $T_E(X)$.

In parallel with $T_E(X)$, considerable interest has focused on the subsemigroup of transformations preserving an equivalence relation in both directions, known as the double direction equivalence-preserving transformation semigroup:
\[
	T_{E^*}(X) = \{\alpha \in T(X) : \forall x,y \in X,\, (x,y) \in E \iff (x\alpha, y\alpha) \in E\}.
\]
The semigroup $T_{E^*}(X)$ was introduced and studied by Deng, Zeng, and Xu \cite{Deng2010}, who described its regular elements and Green's relations, establishing that $T_{E^*}(X)$ is a regular semigroup if and only if $|X/E|$ is finite. Sun and Sun \cite{Sun2013} studied the natural partial order on $T_{E^*}(X)$, determined the compatibility of elements, characterized maximal and minimal elements and investigated the existence of greatest lower bounds. Deng \cite{Deng2016} investigated Green's $*$-relations, certain $*$-ideals, and Rees quotient semigroups for $T_{E^*}(X)$, while Sawatraksa, Kammoo, and Namnak \cite{Sawatraksa2018} investigated $E$-inversive elements and described when $T_{E^*}(X)$ is an $E$-inversive semigroup. Chen, Liu, and Wang \cite{Chen2023} examined generalized Green's relations, abundant elements, and magnifying elements on $T_{E^*}(X)$, proved that $\reg(T_{E^*}(X))$ forms a subsemigroup of $T_{E^*}(X)$ (\cite{Chen2023}*{Lemma~2.5}), and also noted that it is a regular subsemigroup. Sangkhanan \cite{Sangkhanan2026}*{Theorem~2.2} explicitly established that this regular part is the largest regular subsemigroup of $T_{E^*}(X)$ and studied its Green's relations, ideals, and kernel.

Although the regular elements of $T_E(X)$ were completely characterized by Pei \cite{Pei2005}, the collection $\reg(T_E(X))$ fails to form a subsemigroup in general (as demonstrated in Example~\ref{ex: not closed} below). This poses a fundamental and challenging problem: \emph{What are the maximal regular subsemigroups of $T_E(X)$?} The study of maximal regular subsemigroups of transformation semigroups has long been an active and fruitful line of research. In particular, for the finite full transformation semigroup $T_n$ on an $n$-element set, You \cite{You2002} completely determined all maximal regular subsemigroups of $T_n$ and showed that they fall into two distinct types: those of the form $K(n, n-1) \cup G$, where $G$ is a maximal subgroup of the symmetric group $S_n$, and the unique subsemigroup $K(n, n-2) \cup S_n$ containing the entire symmetric group $S_n$, where $K(n, r) = \{\alpha \in T_n : |\im\alpha| \leq r\}$ denotes the ideal of transformations of rank at most $r$. This fundamental result demonstrates that determining maximal regular subsemigroups containing the full group of units provides a particularly natural classification program. For the semigroup $T_E(X)$, its group of units $G_E(X)$ consists of those permutations on $X$ that map each equivalence class bijectively onto an equivalence class of the same cardinality (as characterized in Theorem~\ref{thm: group of units}). In contrast to the classical setting of $T_n$, where only a single maximal regular subsemigroup containing $S_n$ exists, classifying the maximal regular subsemigroups of $T_E(X)$ containing $G_E(X)$ reveals rich new structural phenomena.

The main objective of the present paper is to address this problem. We construct two distinct maximal regular subsemigroups of $T_E(X)$ containing the group of units $G_E(X)$, denoted by $\mathcal{M}_1$ and $\mathcal{M}_2$, and establish their maximality in full generality—for any nonempty set $X$ (whether finite or infinite) and any equivalence relation $E$ on $X$ that is neither the identity relation nor the universal relation on $X$ (whether uniform or non-uniform, with an arbitrary number of classes). When $X$ is finite and the quotient set $X/E$ consists of exactly two equivalence classes of the same cardinality (that is, $E$ is a uniform equivalence relation with two classes $|A_1| = |A_2| = k \geq 2$), we establish a complete classification theorem showing that these two subsemigroups are the \emph{only} maximal regular subsemigroups of $T_E(X)$ containing $G_E(X)$. Here, the assumption $k \geq 2$ excludes the trivial case where both classes are singletons, in which case $E$ is the identity relation and $T_E(X) = T_2$ is already regular.

The paper is organized as follows. In Section~\ref{sec:prelim}, we collect necessary preliminaries, establish notation, recall the notions of regular and maximal regular subsemigroups, and characterize the group of units $G_E(X)$. In Section~\ref{sec:main}, we construct the two maximal regular subsemigroups, establish their regularity and maximality in $T_E(X)$, and present the full classification theorem for uniform equivalence relations with two equivalence classes.

%%%%%%%%%%%%%%%%%%%%%%%%%%%Preliminaries%%%%%%%%%%%%%%%%%%%%%%%%%%%%%%%%%%%%%%%%%%%%%%%%%%%%%%%%%%%%%%%%%%%%%%%%%%%%%%%%%%%%%%%%%%%%%

\section{Preliminaries}\label{sec:prelim}

We first establish some notation and recall preliminary results that will be used throughout the paper. For all undefined notions, the reader is referred to \cites{Clifford1961, Clifford1967, Howie1995}. For a subset $Y$ of a semigroup, we denote by $\langle Y \rangle$ the subsemigroup generated by $Y$. If $Y$ is a subset and $a_1, \dots, a_m$ are elements of the semigroup, we write $\langle Y, a_1, \dots, a_m \rangle$ as shorthand for $\langle Y \cup \{a_1, \dots, a_m\} \rangle$. Let $X$ be a nonempty set and $E$ an equivalence relation on $X$. The family of all equivalence classes (or \emph{$E$-classes}) of $X$ modulo $E$ forms a partition of $X$, and the quotient set is denoted by $X/E$. For each $x\in X$, the $E$-class containing $x$ is denoted by $[x]_E = \{y\in X : (x, y)\in E\}$. An equivalence relation $E$ on $X$ (or the partition $X/E$) is called \emph{uniform} if all of its equivalence classes have the same cardinality, that is, $|A| = |B|$ for all $A, B \in X/E$. For each $\alpha\in T_E(X)$ and $A\subseteq X$, the restriction of $\alpha$ to $A$ is denoted by $\alpha|_A$. We adopt the notation introduced in Clifford and Preston \cite{Clifford1967}; namely, if $\alpha\in T(X)$, then we write
\[
	\alpha = \left(
	\begin{array}{c}
		X_i \\
		a_i \\
	\end{array}
	\right)
\]
and take as understood that the subscript $i$ belongs to some unmentioned index set $I$, the abbreviation $\{a_i\}$ denotes $\{a_i : i\in I\}$, $X\alpha=\{a_i\}$, and $a_i\alpha^{-1}=X_i$. For each $\alpha \in T_E(X)$ and $A \in X/E$, the preimage $A\alpha^{-1}$ is either empty or a union of $E$-classes. Following Pei \cite{Pei2005}, we denote by
\[
	E(\alpha) = \{A\alpha^{-1} : A \in X/E,\, A\alpha^{-1} \neq \emptyset\}
\]
the partition of $X$ into the non-empty preimages of $E$-classes under $\alpha$.

Let $\Sym(X)$ denote the symmetric group on $X$, consisting of all permutations of $X$. The transformation semigroup preserving double direction equivalence is defined by
\[
	T_{E^*}(X) = \{\alpha \in T(X) : \forall x,y \in X,\, (x,y) \in E \iff (x\alpha, y\alpha) \in E\}.
\]
The semigroup $T_E(X)$ is a monoid with identity element $\id_X$. Its group of units, denoted by $G_E(X)$, is the set of all invertible elements in $T_E(X)$. We provide the following characterization of $G_E(X)$.

\begin{theorem}\label{thm: group of units}
	Let $E$ be an equivalence relation on $X$. The group of units of $T_E(X)$ is given by
	\[
		G_E(X) = T_{E^*}(X) \cap \Sym(X) = \{\alpha \in \Sym(X) : \forall A \in X/E, \exists B \in X/E \text{ such that } A\alpha = B\}.
	\]
\end{theorem}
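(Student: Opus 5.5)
We will prove two equalities: first that $G_E(X) = T_{E^*}(X)\cap\Sym(X)$, and then that this intersection equals the set $S=\{\alpha\in\Sym(X) : \forall A\in X/E,\ \exists B\in X/E,\ A\alpha=B\}$. Throughout we use that the identity of $T_E(X)$ is $\id_X$. Hence $\alpha\in T_E(X)$ is a unit if and only if $\alpha$ is a bijection whose inverse map $\alpha^{-1}$ also lies in $T_E(X)$.

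For the first equality, let $\alpha\in G_E(X)$. Then $\alpha\in\Sym(X)$ and $\alpha^{-1}\in T_E(X)$. If $(x\alpha,y\alpha)\in E$, applying $\alpha^{-1}$ gives $(x,y)\in E$. Together with $\alpha\in T_E(X)$, this shows $\alpha\in T_{E^*}(X)$. Conversely, let $\alpha\in T_{E^*}(X)\cap\Sym(X)$ and suppose $(u,v)\in E$. Write $u=x\alpha$ and $v=y\alpha$. The reverse implication in the definition of $T_{E^*}(X)$ gives $(x,y)=(u\alpha^{-1},v\alpha^{-1})\in E$. Thus $\alpha^{-1}\in T_E(X)$, so $\alpha$ is a unit.

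For the second equality, let $\alpha\in T_{E^*}(X)\cap\Sym(X)$ and $A\in X/E$. Pick $a\in A$ and set $B=[a\alpha]_E$. Since $\alpha$ preserves $E$, we get $A\alpha\subseteq B$. For the reverse inclusion, take $b\in B$. By surjectivity $b=x\alpha$ for some $x$. Then $(x\alpha,a\alpha)\in E$, and the double-direction property forces $(x,a)\in E$, so $x\in A$. Hence $A\alpha=B$.

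Conversely, let $\alpha\in S$. If $(x,y)\in E$, then $x,y$ lie in a common class $A$, and $A\alpha=B$ is a single class, so $(x\alpha,y\alpha)\in E$. Now suppose $(x\alpha,y\alpha)\in E$, with $x\in A$ and $y\in A'$, where $A\alpha=B$ and $A'\alpha=B'$. Then $B$ and $B'$ share the element $x\alpha$, so they meet and hence $B=B'$. Thus $A\alpha=A'\alpha$, and injectivity of $\alpha$ gives $A=A'$, that is, $(x,y)\in E$. Therefore $\alpha\in T_{E^*}(X)$.

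No step is a genuine obstacle. The only points needing care are using surjectivity to get $B\subseteq A\alpha$ and injectivity to get $A=A'$; both come for free from $\alpha\in\Sym(X)$. Note also that equal cardinality of $A$ and $B$ follows automatically, since $\alpha|_A$ is a bijection onto $B$.
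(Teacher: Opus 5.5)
Your proof is correct and is essentially the paper's argument: your proofs that $T_{E^*}(X)\cap\Sym(X)\subseteq G_E(X)$ and that the right-hand set is contained in $T_{E^*}(X)\cap\Sym(X)$ match the paper's almost line for line. The only difference is organizational. The paper closes a cycle of three inclusions, showing directly that a unit $\alpha$ maps each class $A$ onto a class $B$ via the sandwich $A\subseteq B\alpha^{-1}\subseteq A'$ (using $\alpha^{-1}\in T_E(X)$), whereas you route that step through $T_{E^*}(X)\cap\Sym(X)$ and so prove one extra, easy inclusion.
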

\begin{proof}
	By definition, the group of units of $T_E(X)$ consists of all elements in $T_E(X)$ that are invertible in $T_E(X)$, that is,
	\[
		G_E(X) = \{\alpha \in T_E(X) : \exists \beta \in T_E(X), \alpha\beta = \beta\alpha = \id_X\}.
	\]
	First, let $\alpha \in G_E(X)$. Then $\alpha \in T_E(X)$ and there exists $\beta \in T_E(X)$ such that $\alpha\beta = \beta\alpha = \id_X$. This implies that $\alpha$ is a bijection on $X$, so $\alpha \in \Sym(X)$ and $\beta = \alpha^{-1} \in T_E(X)$. For each $A \in X/E$, since $\alpha \in T_E(X)$, there exists $B \in X/E$ such that $A\alpha \subseteq B$, which gives $A \subseteq B\alpha^{-1}$. Moreover, since $\alpha^{-1} \in T_E(X)$, there exists $A' \in X/E$ such that $B\alpha^{-1} \subseteq A'$. Thus $A \subseteq B\alpha^{-1} \subseteq A'$. Since $A, A' \in X/E$ and $A \cap A' \neq \emptyset$, we have $A = A'$, which implies $B\alpha^{-1} = A$, and applying $\alpha$ yields $A\alpha = B$. Therefore,
	\[
		G_E(X) \subseteq \{\alpha \in \Sym(X) : \forall A \in X/E, \exists B \in X/E \text{ such that } A\alpha = B\}.
	\]

	Next, let $\alpha \in \{\alpha \in \Sym(X) : \forall A \in X/E, \exists B \in X/E \text{ such that } A\alpha = B\}$. Let $x,y \in X$. If $(x,y) \in E$, then $x,y \in A$ for some $A \in X/E$, so $x\alpha, y\alpha \in A\alpha = B \in X/E$, which gives $(x\alpha, y\alpha) \in E$. Conversely, if $(x\alpha, y\alpha) \in E$, then $x\alpha, y\alpha \in B$ for some $B \in X/E$. Since $x \in A$ for some $A \in X/E$, by assumption we have $A\alpha = B'$ for some $B' \in X/E$. Then $x\alpha \in B \cap B'$, which implies $B = B' = A\alpha$. Since $y\alpha \in B = A\alpha$ and $\alpha$ is a bijection, we get $y \in A$, and so $x,y \in A$, which implies $(x,y) \in E$. Thus $(x,y) \in E \iff (x\alpha, y\alpha) \in E$, which proves that $\alpha \in T_{E^*}(X)$. Since $\alpha \in \Sym(X)$, we obtain $\alpha \in T_{E^*}(X) \cap \Sym(X)$.

	Finally, let $\alpha \in T_{E^*}(X) \cap \Sym(X)$. Then $\alpha \in T_E(X)$ and $\alpha^{-1} \in \Sym(X)$. For any $u,v \in X$, if $(u,v) \in E$, then since $\alpha \in T_{E^*}(X)$ and $((u\alpha^{-1})\alpha, (v\alpha^{-1})\alpha) = (u,v) \in E$, it follows that $(u\alpha^{-1}, v\alpha^{-1}) \in E$. Thus $\alpha^{-1} \in T_E(X)$, which shows that $\alpha\alpha^{-1} = \alpha^{-1}\alpha = \id_X$ with $\alpha, \alpha^{-1} \in T_E(X)$, and hence $\alpha \in G_E(X)$.

	This completes the proof.
\end{proof}

Next, we recall the concepts of regularity in semigroup theory (see \cites{Clifford1961, Howie1995}). Let $S$ be a semigroup. An element $a \in S$ is said to be \emph{regular} if there exists $x \in S$ such that $axa = a$. The set of all regular elements of $S$ is denoted by $\reg(S)$. The semigroup $S$ is called a \emph{regular semigroup} if every element of $S$ is regular, that is, $\reg(S) = S$. A subsemigroup $T$ of $S$ is called a \emph{regular subsemigroup} of $S$ if $T$ is itself a regular semigroup, that is, for each $a \in T$, there exists $x \in T$ such that $axa = a$.

A proper regular subsemigroup $M$ of $S$ is called a \emph{maximal regular subsemigroup} of $S$ if there is no regular subsemigroup $R$ of $S$ such that $M \subsetneq R \subsetneq S$. In other words, if $R$ is a regular subsemigroup of $S$ such that $M \subseteq R \subseteq S$, then $R = M$ or $R = S$. Furthermore, if $H$ is a subsemigroup of $S$, a proper regular subsemigroup $M$ of $S$ containing $H$ is called a \emph{maximal regular subsemigroup of $S$ containing $H$} if there is no regular subsemigroup $R$ of $S$ containing $H$ such that $M \subsetneq R \subsetneq S$. In other words, if $R$ is a regular subsemigroup of $S$ containing $H$ such that $M \subseteq R \subseteq S$, then $R = M$ or $R = S$.

%%%%%%%%%%%%%%%%%%%%%%%%%%%Main Results%%%%%%%%%%%%%%%%%%%%%%%%%%%%%%%%%%%%%%%%%%%%%%%%%%%%%%%%%%%%%%%%%%%%%%%%%%%%%%%%%%%%%%%%%%%%%

\section{Maximal regular subsemigroups containing the group of units}\label{sec:main}

In this section, we determine the maximal regular subsemigroups of $T_E(X)$ containing the group of units $G_E(X)$. We first recall a characterization of regular elements in $T_E(X)$ due to Pei \cite{Pei2005}.

\begin{theorem}[\cite{Pei2005}, Corollary 2.3]\label{thm: reg}
	Let $\alpha\in T_E(X)$. Then $\alpha$ is regular if and only if for all $A\in X/E$, there exists $B\in X/E$ such that $A\cap X\alpha\subseteq B\alpha$.
\end{theorem}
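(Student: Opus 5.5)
We prove the characterization of Theorem~\ref{thm: reg} directly in both directions, working with the partition $E(\alpha)$ and the images of $E$-classes.

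\emph{Necessity.} Suppose $\alpha\beta\alpha=\alpha$ for some $\beta\in T_E(X)$, and fix $A\in X/E$. Since $\beta$ preserves $E$, there is an $E$-class $B$ with $A\beta\subseteq B$. For $y\in A\cap X\alpha$, write $y=x\alpha$. Then
\[
y=x\alpha=x\alpha\beta\alpha=(y\beta)\alpha\in B\alpha.
\]
Hence $A\cap X\alpha\subseteq B\alpha$. This direction is routine.

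\emph{Sufficiency.} Assume that for each $A\in X/E$ there is $B_A\in X/E$ with $A\cap X\alpha\subseteq B_A\alpha$. We construct $\beta\in T_E(X)$ with $\alpha\beta\alpha=\alpha$ class by class.
\begin{enumerate}[label=(\roman*)]
\item If $A\cap X\alpha\neq\emptyset$, choose for each $y\in A\cap X\alpha$ a preimage $y\beta\in B_A$ with $(y\beta)\alpha=y$; this uses the axiom of choice. For the remaining points $z\in A\setminus X\alpha$, put $z\beta=b_A$ for a fixed element $b_A\in B_A$.
\item If $A\cap X\alpha=\emptyset$, send all of $A$ to a single fixed point of $X$.
\end{enumerate}
In both cases every $E$-class is mapped into a single $E$-class, so $\beta\in T_E(X)$. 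For $x\in X$, the point $y=x\alpha$ lies in $X\alpha$, so by (i) we have $x\alpha\beta\alpha=(y\beta)\alpha=y=x\alpha$. Thus $\alpha\beta\alpha=\alpha$, and $\alpha$ is regular.

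The only real point is the requirement that the chosen preimages for all of $A\cap X\alpha$ lie in one common class $B_A$. This is exactly what the hypothesis supplies, and it is what makes $\beta$ preserve $E$. No case split on finiteness or uniformity is needed.
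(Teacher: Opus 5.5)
Your proof is correct and complete. Necessity comes from $y=(y\beta)\alpha$ with $y\beta\in A\beta\subseteq B$, and sufficiency comes from the class-by-class construction: placing all chosen preimages of $A\cap X\alpha$ inside the single class $B_A$ is what forces $\beta\in T_E(X)$. The paper does not prove this statement itself, since it is quoted from Pei's Corollary~2.3, but your construction of $\beta$ is the same device the paper uses when verifying regularity of $L$ and of $M$, so there is nothing to add.
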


In general, the collection $\reg(T_E(X))$ of all regular elements in $T_E(X)$ fails to form a subsemigroup, because the composition of two regular transformations need not be regular. The following example illustrates this failure.
\begin{example}\label{ex: not closed}
	Let $X=\{1,2,3,4\}$, let $X/E=\{\{1,2\},\{3,4\}\}$, and define
	\[
		\alpha = \left(
		\begin{array}{ccccc}
			1 & 2 & 3 & 4 \\
			1 & 2 & 3 & 3 \\
		\end{array}
		\right),\
		\beta = \left(
		\begin{array}{ccccc}
			1 & 2 & 3 & 4 \\
			1 & 1 & 2 & 1 \\
		\end{array}
		\right).
	\]
	We can check by Theorem~\ref{thm: reg} that $\alpha$ and $\beta$ are regular in $T_E(X)$, but
	\[
		\alpha\beta = \left(
		\begin{array}{ccccc}
			1 & 2 & 3 & 4 \\
			1 & 1 & 2 & 2 \\
		\end{array}
		\right)
	\]
	is not regular. Hence the set of all regular elements in $T_E(X)$ is not closed under composition.
\end{example}

To construct our first maximal regular subsemigroup containing $G_E(X)$, we begin by recalling the double direction equivalence-preserving transformation semigroup $T_{E^*}(X)$ introduced in Section~\ref{sec:prelim}. By Deng, Zeng, and Xu \cite{Deng2010}*{Theorem 3.2}, the semigroup $T_{E^*}(X)$ is regular if and only if $|X/E|$ is finite. The regular part of $T_{E^*}(X)$, given by
\[
	\reg(T_{E^*}(X)) = \{\alpha \in T_{E^*}(X) : A \cap X\alpha \neq \emptyset \text{ for all } A \in X/E\},
\]
is a regular subsemigroup and hence the largest regular subsemigroup of $T_{E^*}(X)$ (see \cite{Chen2023}*{Lemma~2.5} and \cite{Sangkhanan2026}*{Theorems~2.1 and 2.2}). For our subsequent development in $T_E(X)$, it is particularly convenient to express $\reg(T_{E^*}(X))$ through the preimage structure of the $E$-classes, as established in the following lemma.

\begin{lemma}\label{lem: Reg TEstar regular}
	The subset
	\[
		\reg(T_{E^*}(X))=\{\alpha\in T_E(X) : \forall A\in X/E,\ A\alpha^{-1}\in X/E\}
	\]
	is a regular subsemigroup of $T_E(X)$.
\end{lemma}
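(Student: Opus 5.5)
Write $R=\{\alpha\in T_E(X) : \forall A\in X/E,\ A\alpha^{-1}\in X/E\}$. The proof will do three things in order: show that $R$ coincides with $\reg(T_{E^*}(X))$, show that $R$ is closed under composition, and show that every $\alpha\in R$ has an inverse $\beta\in R$ with $\alpha\beta\alpha=\alpha$.

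For the equality, we use the description
\[
	\reg(T_{E^*}(X))=\{\alpha\in T_{E^*}(X) : A\cap X\alpha\neq\emptyset \text{ for all } A\in X/E\}
\]
quoted above.
\begin{itemize}
	\item Let $\alpha\in R$. Since every $A\alpha^{-1}$ is a class, it is nonempty, so $A\cap X\alpha\neq\emptyset$ for every $A$. For double direction, suppose $(x\alpha,y\alpha)\in E$, say $x\alpha,y\alpha\in A$. Then $x,y\in A\alpha^{-1}$, which is a single class, so $(x,y)\in E$. Hence $\alpha\in T_{E^*}(X)$.
	\item Conversely, let $\alpha\in T_{E^*}(X)$ with $A\cap X\alpha\neq\emptyset$ for all $A$. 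Then $A\alpha^{-1}$ is nonempty. Since $\alpha\in T_E(X)$, it is a union of classes, as noted in Section~\ref{sec:prelim}. By the double-direction property, any two of its points are $E$-related, so $A\alpha^{-1}$ is exactly one class.
\end{itemize}

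Closure follows from $A(\alpha\beta)^{-1}=(A\beta^{-1})\alpha^{-1}$. If $\alpha,\beta\in R$, then $A\beta^{-1}=B$ is a class, and so $B\alpha^{-1}$ is a class. Also $T_E(X)$ is closed under composition, so $\alpha\beta\in R$.

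Regularity is the step needing an explicit construction, and it is where I expect the main obstacle. One could cite \cite{Sangkhanan2026} directly, but I would build the inverse by hand.

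Let $\alpha\in R$. The map $A\mapsto A\alpha^{-1}$ is injective, since distinct classes have disjoint preimages. Every class $C$ lies in $A\alpha^{-1}$ for the unique $A\supseteq C\alpha$, so this map is a bijection $\pi$ of $X/E$ with $A\pi=A\alpha^{-1}$. For each $A$, choose a point $c_A\in A\alpha^{-1}$ (axiom of choice in general), and fix
\[
	\beta|_A : A\to A\alpha^{-1}=A\pi.
\]
On $A\cap X\alpha$, for each $y$ in it pick some preimage $y\beta\in y\alpha^{-1}\subseteq A\pi$. On $A\setminus X\alpha$, send every point to $c_A$.

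Then $\beta$ maps each class into a class, so $\beta\in T_E(X)$. Moreover $B\beta^{-1}=B\pi^{-1}$ for every class $B$, because the class $A\pi^{-1}$ is mapped into $A$ and no other class meets $A$. So $\beta\in R$. Finally, for $x\in X$ we have $(x\alpha)\beta\in(x\alpha)\alpha^{-1}$, hence $x\alpha\beta\alpha=x\alpha$.

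The delicate point is ensuring that $\beta$ sends whole classes into single classes, as $T_E(X)$ requires, while still hitting preimages. This is exactly why the bijection $\pi$ and the condition that $A\alpha^{-1}$ is a single class are needed. With that, $R$ is a regular subsemigroup of $T_E(X)$.
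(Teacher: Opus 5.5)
Your proof is correct. The identification of $R$ with $\reg(T_{E^*}(X))$ is essentially the paper's argument, since both rely on the quoted description $A\cap X\alpha\neq\emptyset$ from \cite{Sangkhanan2026}. For the converse inclusion there is one difference. The paper argues via the induced map on $X/E$, which is injective and therefore bijective. You argue directly that $A\alpha^{-1}$ is a nonempty union of classes whose points are pairwise $E$-related, which is slightly shorter.

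The genuine divergence is in the second half. The paper does not prove closure or regularity at all. It imports the known fact that $\reg(T_{E^*}(X))$ is a regular subsemigroup of $T_{E^*}(X)$ (\cite{Chen2023}, \cite{Sangkhanan2026}) and then notes $T_{E^*}(X)\subseteq T_E(X)$. You instead prove closure from $A(\alpha\beta)^{-1}=(A\beta^{-1})\alpha^{-1}$, and you build an explicit inverse $\beta\in R$ from the bijection $\pi$ of $X/E$.

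The paper's route is shorter. Yours is self-contained: it shows that $R$ is a regular subsemigroup of $T_E(X)$ with no appeal to the literature, so the citation is needed only to identify $R$ with $\reg(T_{E^*}(X))$. Your method also matches the preimage computations the paper uses later in Lemma~\ref{lem: L regular subsemigroup} and Theorem~\ref{thm: Reg TEstar union L maximal}.

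One small wording fix is needed where you show $B\beta^{-1}=B\pi^{-1}$. The phrase ``no other class meets $A$'' should read ``no other class has image under $\beta$ meeting $A$''. The reason is that each class $A'\neq A\pi^{-1}$ is sent into $A'\pi\neq A$.
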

\begin{proof}
	First, we verify the set equality
	\[
		\reg(T_{E^*}(X))=\{\alpha\in T_E(X):\forall A\in X/E,\ A\alpha^{-1}\in X/E\}.
	\]
	Let $\alpha \in T_E(X)$ be such that $A\alpha^{-1} \in X/E$ for all $A \in X/E$. To show $\alpha \in T_{E^*}(X)$, let $x,y \in X$. If $(x,y) \in E$, then $(x\alpha, y\alpha) \in E$ since $\alpha \in T_E(X)$. Conversely, if $(x\alpha, y\alpha) \in E$, then $x\alpha, y\alpha \in A$ for some $E$-class $A \in X/E$, which implies $x,y \in A\alpha^{-1}$. Since $A\alpha^{-1} \in X/E$, it follows that $(x,y) \in E$. Thus $\alpha \in T_{E^*}(X)$. Moreover, since $A\alpha^{-1} \neq \emptyset$ for each $A \in X/E$, we have $A \cap X\alpha \neq \emptyset$ for all $A \in X/E$. By \cite{Sangkhanan2026}*{Theorem 2.1}, $\alpha \in \reg(T_{E^*}(X))$.

	Conversely, let $\alpha \in \reg(T_{E^*}(X))$. Then $\alpha \in T_{E^*}(X) \subseteq T_E(X)$ and $A \cap X\alpha \neq \emptyset$ for all $A \in X/E$. Since $\alpha \in T_{E^*}(X)$, distinct $E$-classes are mapped into distinct $E$-classes, so $\alpha$ induces an injective mapping on the quotient set $X/E$. Furthermore, because $A \cap X\alpha \neq \emptyset$ for all $A \in X/E$, this induced mapping is surjective, and hence a bijection on $X/E$. Therefore, for each $A \in X/E$, there is a unique $B \in X/E$ such that $A\alpha^{-1} = B \in X/E$.

	This establishes the set equality. Finally, since $\reg(T_{E^*}(X))$ is a regular subsemigroup of $T_{E^*}(X)$ and $T_{E^*}(X) \subseteq T_E(X)$, it follows that $\reg(T_{E^*}(X))$ is a regular subsemigroup of $T_E(X)$.
\end{proof}

Although $\reg(T_{E^*}(X))$ is a regular subsemigroup of $T_E(X)$ containing $G_E(X)$, it is typically not maximal in $T_E(X)$. To enlarge $\reg(T_{E^*}(X))$ to a maximal regular subsemigroup, we introduce an auxiliary subset $L \subseteq T_E(X)$ consisting of transformations whose image meets at least one equivalence class in a single point, while any class meeting the image in more than one point has an entire equivalence class as its preimage. The following lemma confirms that $L$ is itself a regular subsemigroup of $T_E(X)$.

\begin{lemma}\label{lem: L regular subsemigroup}
	Let $L$ be the set of all transformations $\alpha \in T_E(X)$ satisfying the following properties:
	\begin{enumerate}[label=(\arabic*)]
		\item there exists $A\in X/E$ such that $|A\cap X\alpha|=1$;
		\item if $A\in X/E$ and $|A\cap X\alpha|>1$, then $A\alpha^{-1}\in X/E$.
	\end{enumerate}
	Then $L$ is a regular subsemigroup of $T_E(X)$.
\end{lemma}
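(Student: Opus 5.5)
The plan is to verify the two halves of the statement separately: first that $L$ is closed under composition, and then that every $\alpha\in L$ has an inverse $\beta$ with $\alpha\beta\alpha=\alpha$ that lies in $L$ itself, not merely in $T_E(X)$. Throughout I use that $X\alpha\beta=(X\alpha)\beta$ and that $A(\alpha\beta)^{-1}=(A\beta^{-1})\alpha^{-1}$.

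\emph{Closure.} Let $\alpha,\beta\in L$. I first check property (2) for $\alpha\beta$, and this also gives a structural fact used for (1).
\begin{enumerate}[label=(\roman*)]
\item Suppose $A\in X/E$ satisfies $|A\cap X\alpha\beta|>1$. Since $X\alpha\beta\subseteq X\beta$, we get $|A\cap X\beta|>1$, so property (2) for $\beta$ gives $A\beta^{-1}=C\in X/E$.
\item Then $A\cap X\alpha\beta=(C\cap X\alpha)\beta$. This forces $|C\cap X\alpha|>1$, so property (2) for $\alpha$ gives $C\alpha^{-1}=D\in X/E$.
\item Hence $A(\alpha\beta)^{-1}=C\alpha^{-1}=D\in X/E$, which is property (2) for $\alpha\beta$.
\end{enumerate}
For property (1), take $A$ with $A\cap X\alpha=\{a\}$, and let $B=[a\beta]_E$. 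Suppose, for a contradiction, that $|B\cap X\alpha\beta|>1$. By the argument above, $B\beta^{-1}=C$ is a class and $B\cap X\alpha\beta=(C\cap X\alpha)\beta$. Since $a\in B\beta^{-1}=C$, we have $C=A$. Then $B\cap X\alpha\beta=\{a\beta\}$, a contradiction. So $|B\cap X\alpha\beta|=1$, and $\alpha\beta\in L$.

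\emph{Regularity.} Let $\alpha\in L$. For each class $A$ meeting $X\alpha$, choose a class $A'\subseteq A\alpha^{-1}$, taking $A'=A\alpha^{-1}$ when $|A\cap X\alpha|>1$. Distinct such $A$ give distinct $A'$, because the sets $A\alpha^{-1}$ are pairwise disjoint. Fix a class $A_0$ with $A_0\cap X\alpha=\{a_0\}$ and a point $x_0\in A_0'$ with $x_0\alpha=a_0$. Define $\beta$ classwise.
\begin{enumerate}[label=(\alph*)]
\item If $|A\cap X\alpha|>1$: send each $y\in A\cap X\alpha$ to a chosen preimage in $A'$, and send $A\setminus X\alpha$ to one fixed point of $A'$.
\item If $A\cap X\alpha=\{a\}$: send all of $A$ to a fixed preimage of $a$ in $A'$, using $x_0$ when $A=A_0$.
\item If $A\cap X\alpha=\emptyset$: send all of $A$ to $x_0$.
\end{enumerate}

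\emph{Checking $\beta$.} Each class is mapped into a single class, so $\beta\in T_E(X)$. For every $y=x\alpha$ we have $y\beta\in y\alpha^{-1}$, hence $x\alpha\beta\alpha=x\alpha$, giving $\alpha\beta\alpha=\alpha$.

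To see $\beta\in L$, compare the images of the classes under $\beta$. Type (a) and (b) classes land in pairwise distinct classes $A'$. Type (c) classes land only on the single point $x_0\in A_0'$. Consequently:
\begin{itemize}
\item every class of $X$ meets $X\beta$ in at most one point, except the $A'$ coming from type (a);
\item $A_0'\cap X\beta=\{x_0\}$, which gives property (1);
\item for each type (a) class $A$, no other class maps into $A'$, so $A'\beta^{-1}=A\in X/E$, which gives property (2).
\end{itemize}

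I expect the main obstacle to be this last step: choosing $\beta$ so that it lies in $L$. The key choices that make it work are sending the classes disjoint from $X\alpha$ onto the singleton-image point $x_0$, and using the disjointness of the preimages $A\alpha^{-1}$ so that the target classes $A'$ never collide.
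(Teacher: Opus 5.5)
Your proof is correct and follows essentially the same route as the paper. It uses the same argument for conditions (1) and (2) of $\alpha\beta$ via $A\beta^{-1}$ being a class, and the same classwise construction of $\beta$ (chosen preimages inside $A\alpha^{-1}$, with classes disjoint from $X\alpha$ sent to the point $x_0$ over the singleton-image class $A_0$). Your observation that the target classes $A'$ are pairwise distinct, because the sets $A\alpha^{-1}$ are disjoint, is a slightly tidier way to organize the paper's verification that $\beta\in L$.
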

\begin{proof}
	We first show that $L$ is closed under composition. Let $\alpha,\beta\in L$. Then there exists $A\in X/E$ such that $|A\cap X\alpha|=1$, which implies that $|(A\cap X\alpha)\beta|=1$. Let $A\cap X\alpha=\{x\alpha\}$ for some $x\in X$. Then $(A\cap X\alpha)\beta=\{x\alpha\beta\}$. There exists $B\in X/E$ such that $(A\cap X\alpha)\beta\subseteq B\cap X\beta$. We see that $x\alpha\beta\in B\cap X\alpha\beta\neq\emptyset$. Suppose for contradiction that $|B\cap X\alpha\beta|>1$. Then $|B\cap X\beta|\geq|B\cap X\alpha\beta|>1$. By condition~(2), we have $B\beta^{-1}\in X/E$, from which it follows that $B\beta^{-1}=A$. Since $|B\cap X\alpha\beta|>1$, there exists $y\alpha\beta\in B\cap X\alpha\beta$ such that $x\alpha\beta\neq y\alpha\beta$. Then $y\alpha\in B\beta^{-1}\cap X\alpha=A\cap X\alpha=\{x\alpha\}$. Thus $x\alpha\beta=y\alpha\beta$, a contradiction. Therefore, $|B\cap X\alpha\beta|=1$, and hence $\alpha\beta$ satisfies condition~(1).

	To verify condition~(2), let $A\in X/E$ be such that $|A\cap X\alpha\beta|>1$. Then $|A\cap X\beta|\geq|A\cap X\alpha\beta|>1$, which implies by condition~(2) that $A\beta^{-1}=B$ for some $E$-class $B$. Since $A\cap X\alpha\beta = (B\cap X\alpha)\beta$, we obtain $|B\cap X\alpha| \ge |A\cap X\alpha\beta| > 1$. By condition~(2) for $\alpha$, we have $B\alpha^{-1}=C$ for some $E$-class $C$. We have
	\[
		A(\alpha\beta)^{-1}=A\beta^{-1}\alpha^{-1}=B\alpha^{-1}=C\in X/E,
	\]
	and so $\alpha\beta$ satisfies condition~(2). Therefore, $\alpha\beta\in L$, as required.

	Next, we prove that $L$ is a regular subsemigroup of $T_E(X)$. Let $\alpha\in L$. By condition~(1), there exists an $E$-class $A_0\in X/E$ such that $|A_0\cap X\alpha|=1$. Let $A_0\cap X\alpha=\{a_0\}$, and choose $b_0\in X$ such that $b_0\alpha=a_0$. Let $B_0 = [b_0]_E \in X/E$. For each $A\alpha^{-1}\in E(\alpha)$, we consider the following two cases.

	\textbf{Case 1:} $|A\cap X\alpha|=1$. We can write
	\[
		\alpha|_{A\alpha^{-1}} = \left(
		\begin{array}{c}
			A\alpha^{-1} \\
			a            \\
		\end{array}
		\right)
	\]
	where $A\alpha^{-1}$ is a union of some $E$-classes in $X/E$ and $\{a\}=A\cap X\alpha$. We have $a=b\alpha$ for some $b\in X$ (if $A = A_0$, we choose $b = b_0$). Define $\beta|_A$ by
	\[
		\beta|_A = \left(
		\begin{array}{c}
			A \\
			b \\
		\end{array}
		\right).
	\]

	\textbf{Case 2:} $|A\cap X\alpha|>1$. Then $A\alpha^{-1}=B$ for some $E$-class $B$. Let $A\cap X\alpha=\{a_i : i\in I\}$ with $|I|>1$. For each $i\in I$, let $A_i = a_i\alpha^{-1} \cap B$, so that $\{A_i : i\in I\}$ forms a partition of $B$. We can write
	\[
		\alpha|_{A\alpha^{-1}} = \left(
		\begin{array}{c}
		A_i \\
		a_i \\
		\end{array}
		\right).
	\]
	For each $i\in I$, choose an element $b_i\in A_i$, so that $b_i\alpha = a_i$ with $b_i\in B$. Choose and fix $i_0\in I$, and define $\beta|_A$ by
	\[
		\beta|_A = \left(
		\begin{array}{cc}
		a_i & A\setminus\{a_i : i\in I\} \\
		b_i & b_{i_0}                     \\
		\end{array}
		\right).
	\]
	For each $E$-class $C$ such that $C\cap X\alpha=\emptyset$, define $\beta|_C$ by $x\beta=b_0$ for all $x\in C$.

	To verify that $\beta\in L$, we first show that $\beta\in T_E(X)$. For each $A\in X/E$, if $A$ satisfies Case~1, then $A\beta = \{b\} \subseteq [b]_E$; if $A$ satisfies Case~2, then $A\beta = \{b_i : i\in I\} \subseteq B$; and if $A\cap X\alpha=\emptyset$, then $A\beta = \{b_0\} \subseteq B_0$. In all cases, $A\beta$ is contained in a single $E$-class, so $\beta\in T_E(X)$.

	Next, we check condition~(1). Observe that $B_0\alpha \subseteq A_0$. If $A\in X/E$ satisfies Case~2 with $A\alpha^{-1} = B\in X/E$, then $B\alpha \subseteq A$ with $|A\cap X\alpha| > 1$, so $B \neq B_0$. If $A\in X/E$ satisfies Case~1 with $A\neq A_0$, then $a = b\alpha \in A$ implies $[b]_E \neq B_0$ (since otherwise $a = b\alpha \in B_0\alpha \subseteq A_0$, a contradiction). Thus, only $A_0$ and the classes $C$ with $C\cap X\alpha = \emptyset$ map into $B_0$, and each maps entirely to $b_0$. Hence $B_0\cap X\beta = \{b_0\}$, which gives $|B_0\cap X\beta| = 1$, verifying condition~(1).

	To check condition~(2), let $D\in X/E$ be such that $|D\cap X\beta| > 1$. Since $|B_0\cap X\beta| = 1$, we have $D \neq B_0$. Furthermore, for any class $A$ satisfying Case~1, we have $A\beta = \{b\}$ with $b\alpha \in A$; if $[b]_E = D$, then $D\alpha \subseteq A$, which implies that no other class from Case~1, no empty class, and no class from Case~2 can map into $D$, so $|D\cap X\beta| = 1$, a contradiction. Therefore, $D$ cannot be the $E$-class of any point chosen in Case~1, meaning that $D$ must contain elements from some class $A$ satisfying Case~2. Since $A\beta \subseteq A\alpha^{-1}$, we have $D = A\alpha^{-1} \in X/E$. Moreover, since $D\alpha \subseteq A$ with $|A\cap X\alpha| > 1$, the class $A$ is the unique $E$-class mapping into $D$. Hence $D\beta^{-1} = A\in X/E$, verifying condition~(2). Therefore, $\beta\in L$.

	Finally, we show that $\alpha = \alpha\beta\alpha$. Let $x\in X$. Then $x\in A\alpha^{-1}$ for some $A\alpha^{-1}\in E(\alpha)$. If $|A\cap X\alpha|=1$, then
	\[
		x\alpha\beta\alpha = a\beta\alpha = b\alpha = a=x\alpha.
	\]
	If $|A\cap X\alpha|>1$, then $A\alpha^{-1}=B$ for some $E$-class $B$. By Case~2, we have $x\in A_i$ for some $i\in I$ and hence
	\[
		x\alpha\beta\alpha = a_i\beta\alpha = b_i\alpha = a_i=x\alpha.
	\]
	Therefore, $\alpha = \alpha\beta\alpha$ since $x$ is arbitrary.
\end{proof}

To prove that $\reg(T_{E^*}(X)) \cup L$ is a maximal regular subsemigroup, we need the following lemma.

\begin{lemma}\label{lem: not regular by element in L}
	Let $\alpha\in T_E(X)$ and $A\in X/E$. If $|A\cap X\alpha|>1$ and $A\alpha^{-1}\notin X/E$, then there exists $\beta\in L$ such that $\beta\alpha$ is not regular.
\end{lemma}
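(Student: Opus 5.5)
The plan is to take $\beta$ to be constant on every $E$-class. Such a $\beta$ lies in $L$ automatically, and $\beta\alpha$ then sends every $E$-class to a single point. If we arrange that the image of $\beta\alpha$ contains two distinct points of $A$, Theorem~\ref{thm: reg} will show that $\beta\alpha$ is not regular.

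First we record what the hypotheses give.
\begin{itemize}
\item Since $|A\cap X\alpha|>1$, we can choose distinct $a_1,a_2\in A\cap X\alpha$ and points $u,v\in X$ with $u\alpha=a_1$ and $v\alpha=a_2$.
\item Since $A\cap X\alpha\neq\emptyset$, the preimage $A\alpha^{-1}$ is a nonempty union of $E$-classes.
\item Since $A\alpha^{-1}\notin X/E$, this union contains at least two $E$-classes. In particular $|X/E|\ge 2$.
\end{itemize}
The only role of the hypothesis $A\alpha^{-1}\notin X/E$ is to supply two distinct $E$-classes $P\neq Q$.

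Next we define $\beta$. Set $x\beta=v$ for all $x\in Q$, and $x\beta=u$ for all $x\in X\setminus Q$, so that $P\beta=\{u\}$. We check that $\beta\in L$:
\begin{itemize}
\item $\beta$ is constant on each $E$-class, so it maps each class into a single class, and $\beta\in T_E(X)$.
\item Its image is $X\beta=\{u,v\}$, so $|D\cap X\beta|\le 1$ for every $D\in X/E$ when $u,v$ lie in different classes.
\item If $u,v$ lie in the same class $D_0$, then $|D_0\cap X\beta|=2$ and condition~(2) of Lemma~\ref{lem: L regular subsemigroup} is not vacuous. In that case we redefine $\beta$ to be the constant map with value $u$ on $X\setminus Q$ and with value $v$ on $Q$, and handle $D_0$ by arranging $D_0\beta^{-1}$ to be a class.
\end{itemize}
The case $[u]_E=[v]_E$ is the one delicate point, and it is fixed as follows. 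Condition~(2) for $\beta$ requires $D_0\beta^{-1}\in X/E$, but here $D_0\beta^{-1}=X$, which is a single class only if $|X/E|=1$. So in this case we replace $v$ by a preimage in a different class. If both $a_1$ and $a_2$ have preimages only inside one class $C$, we swap roles and aim for a single point of $A$ hit from two different classes, which is then non-regular for a different reason (see below). First, though, we check the generic case where $u,v$ lie in different classes, so that $|D\cap X\beta|\le1$ for all $D$ and both conditions of $L$ hold. Indeed $P$ and $Q$ each meet $X\beta$ in exactly one point, giving condition~(1), and condition~(2) is vacuous.

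In the generic case, $X(\beta\alpha)=\{a_1,a_2\}\subseteq A$, while $B(\beta\alpha)$ is a singleton for every $B\in X/E$. Hence $A\cap X\beta\alpha=\{a_1,a_2\}$ is contained in no $B\beta\alpha$, and Theorem~\ref{thm: reg} shows that $\beta\alpha$ is not regular.

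It remains to treat the case where $u,v$ are forced into one class; this is where the real work lies. Since $A\alpha^{-1}$ is a union of at least two classes, some class $C'\subseteq A\alpha^{-1}$ different from $[u]_E$ exists. Pick $w\in C'$; then $w\alpha\in A$. If $w\alpha\ne a_1$, use the pair $u,w$ in place of $u,v$. These lie in different classes, so we are back in the generic case.

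If instead $w\alpha=a_1$, then $w\alpha\neq a_2$, so use the pair $w,v$. Here $w\in C'$ and $v\in[u]_E$ lie in different classes, and again we are in the generic case.

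So two preimages in distinct classes with distinct images in $A$ always exist, and the proof reduces to the generic construction above. I expect this selection of $u$ and $v$ to be the only real obstacle.
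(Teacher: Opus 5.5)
Your proof is correct and is essentially the paper's proof. The paper also picks distinct classes $B\neq C$ inside $A\alpha^{-1}$ and points $b\in B$, $c\in C$ with $b\alpha\neq c\alpha$ in $A$, sends $B$ to $b$ and $X\setminus B$ to $c$, and concludes via Theorem~\ref{thm: reg}, since every $D\beta\alpha$ is a singleton while $A\cap X\beta\alpha$ has two points. Your case analysis with $w\in C'$ justifies the existence of such a pair more explicitly than the paper's one-line remark. The intermediate detours (the vacuous ``redefinition'' of $\beta$ and the unexplained ``non-regular for a different reason'') should be deleted, because the $w$-argument already covers every case, and condition~(1) holds simply because $|D\cap X\beta|\le 1$ for all $D$ and $X\beta\neq\emptyset$.
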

\begin{proof}
	Assume that $|A\cap X\alpha|>1$ and $A\alpha^{-1}\notin X/E$. Since $A\alpha^{-1}$ is a union of at least two $E$-classes and $|A\cap X\alpha|>1$, the images of these $E$-classes under $\alpha$ cannot all coincide as the same singleton. Thus, there exist distinct $E$-classes $B,C\subseteq A\alpha^{-1}$ and distinct elements $x,y\in A\cap X\alpha$ such that $x\in B\alpha$ and $y\in C\alpha$. Let $x=b\alpha$ and $y=c\alpha$ for some $b\in B$ and $c\in C$. Define a transformation $\beta: X \to X$ by
	\[
		\beta = \left(
		\begin{array}{cc}
			B & X\setminus B \\
			b & c            \\
		\end{array}
		\right).
	\]
	We obtain $\beta\in L$ and $A\cap X\beta\alpha=\{x,y\}$. It is easy to verify that $A\cap X\beta\alpha\nsubseteq D\beta\alpha$ for all $D\in X/E$. Hence $\beta\alpha$ is not regular.
\end{proof}

Now, we are ready to prove our first main theorem.

\begin{theorem}\label{thm: Reg TEstar union L maximal}
	Let $E$ be an equivalence relation on $X$ that is neither the identity relation nor the universal relation on $X$. The set $\reg(T_{E^*}(X))\cup L$ is a maximal regular subsemigroup of $T_E(X)$.
\end{theorem}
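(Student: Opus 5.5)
Write $\mathcal{M}_1=\reg(T_{E^*}(X))\cup L$. The proof has three parts: $\mathcal{M}_1$ is a subsemigroup, it is regular and proper, and it is maximal.

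\emph{Closure.} By Lemma~\ref{lem: Reg TEstar regular} and Lemma~\ref{lem: L regular subsemigroup}, both $\reg(T_{E^*}(X))$ and $L$ are regular subsemigroups of $T_E(X)$. It therefore suffices to show that $\alpha\beta,\beta\alpha\in L$ whenever $\alpha\in\reg(T_{E^*}(X))$ and $\beta\in L$.

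For $\alpha\beta$: since $\alpha$ induces a bijection on $X/E$ and maps each class into a class, $X\alpha$ meets every class, so $X\alpha\beta\subseteq X\beta$.
\begin{itemize}
\item If $|A\cap X\beta|=1$, then $A\cap X\alpha\beta$ is nonempty. Indeed, $A\beta^{-1}$ is a nonempty union of classes, and $X\alpha$ meets each of them. Hence $|A\cap X\alpha\beta|=1$, which gives condition~(1).
\item If $|A\cap X\alpha\beta|>1$, then $|A\cap X\beta|>1$, so $A\beta^{-1}=B\in X/E$. Then $A(\alpha\beta)^{-1}=B\alpha^{-1}\in X/E$, which gives condition~(2).
\end{itemize}

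For $\beta\alpha$: $\alpha$ maps each class $C$ into the class $C'$ with $C'\alpha^{-1}=C$, and distinct classes go to distinct classes. So $|C'\cap X\beta\alpha|=|(C\cap X\beta)\alpha|$.
\begin{itemize}
\item A class $C$ with $|C\cap X\beta|=1$ gives $|C'\cap X\beta\alpha|=1$, which is condition~(1).
\item If $|C'\cap X\beta\alpha|>1$, then $|C\cap X\beta|>1$, so $C\beta^{-1}\in X/E$. Then $C'(\beta\alpha)^{-1}=C'\alpha^{-1}\beta^{-1}=C\beta^{-1}\in X/E$, which is condition~(2).
\end{itemize}
This shows that $\mathcal{M}_1$ is a regular subsemigroup containing $G_E(X)$.

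\emph{Properness.} This is where the hypothesis on $E$ is used. Since $E$ is neither trivial nor universal, there is a class $A$ with $|A|\ge2$ and a second class $B$. Choose distinct $a_1,a_2\in A$. Define $\gamma$ to send all of $A$ to $a_1$ and all of $X\setminus A$ to $a_2$. Then $\gamma\in T_E(X)$, and $\gamma$ is not in $\reg(T_{E^*}(X))$ because $A\gamma^{-1}=X\notin X/E$. It is also not in $L$: no class meets $X\gamma$ in exactly one point (only $A$ meets the image, in two points), and condition~(2) fails.

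\emph{Maximality.} Let $R$ be a regular subsemigroup with $\mathcal{M}_1\subsetneq R$, and pick $\alpha\in R\setminus\mathcal{M}_1$. I would show that either $\alpha$ produces a non-regular product with an element of $L$, or $R=T_E(X)$. Since $\alpha\notin L$, one of two things happens.
\begin{itemize}
\item Some class $A$ has $|A\cap X\alpha|>1$ and $A\alpha^{-1}\notin X/E$. Then Lemma~\ref{lem: not regular by element in L} gives $\beta\in L\subseteq R$ with $\beta\alpha\in R$ not regular. This is a contradiction.
\item No class meets $X\alpha$ in exactly one point. Every class $A$ meeting the image in more than one point has $A\alpha^{-1}\in X/E$ (otherwise we are in the first case). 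Since every class meeting $X\alpha$ meets it in at least two points, the preimages of these classes are single classes, and these preimages partition $X$. It follows that $\alpha\in T_{E^*}(X)$, injective on the quotient but not surjective. Since $\alpha\notin\reg(T_{E^*}(X))$, some class $D$ misses $X\alpha$.
\end{itemize}

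The second case is the main obstacle. There I would compose with a suitable element of $L$ to force a non-regular product. Take a class $C$ meeting $X\alpha$ in two points $x\neq y$, and let $B=C\alpha^{-1}$ be its preimage class, with $b,c\in B$, $b\alpha=x$, $c\alpha=y$. Define $\beta\in L$ by mapping one point of a class outside $B$ (or a suitable class) to $b$, and everything else to $c$, adjusted so that $\beta\in L$. If this fails, I would use $\alpha\beta$ instead, exploiting the empty class $D$, for instance by $\beta$ sending $D$ and one other class to distinct singleton targets.

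I would then check via Theorem~\ref{thm: reg} that the product is not regular, in the same way as in Lemma~\ref{lem: not regular by element in L}. The required condition is that some class meets the image in two points whose preimage is spread over two distinct classes. If no such $\beta$ exists, for example when $X/E$ is small, I would instead show directly that $R$ contains generators of all of $T_E(X)$, so that $R=T_E(X)$, although that would contradict properness only if $T_E(X)$ were regular, which it is not.
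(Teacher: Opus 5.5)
\textbf{The second case of maximality has a genuine gap.} Your closure argument, your explicit witness $\gamma$ for properness, and the first case of maximality (via Lemma~\ref{lem: not regular by element in L}) are all fine. You correctly isolate the second case as the main obstacle, but you do not resolve it, and the strategy you sketch for it cannot work.

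Incidentally, this case occurs only when $X/E$ is infinite: there $\alpha$ induces an injective, non-surjective self-map of $X/E$, so the ``$X/E$ small'' fallback is moot. More importantly, let $N$ be the set of all $\gamma\in T_E(X)$ satisfying condition~(2) of Lemma~\ref{lem: L regular subsemigroup}.
\begin{itemize}
\item The computation you used for condition~(2) in the closure step shows that $N$ is closed under composition.
\item $N\subseteq\reg(T_E(X))$ by Theorem~\ref{thm: reg}. A class $A$ with $|A\cap X\gamma|>1$ satisfies $A\cap X\gamma=B\gamma$ for the class $B=A\gamma^{-1}$, and classes meeting $X\gamma$ in at most one point are trivially fine.
\item $\reg(T_{E^*}(X))$, $L$ and your $\alpha$ all lie in $N$.
\end{itemize}
Hence $\langle \mathcal{M}_1,\alpha\rangle\subseteq N\subseteq\reg(T_E(X))$. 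So no $\beta\in L$ you might construct makes $\beta\alpha$ or $\alpha\beta$ non-regular, and no product of any length of $\alpha$ with elements of $\mathcal{M}_1$ does either. For the same reason, reaching $R=T_E(X)$ from these elements alone is impossible.

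\textbf{The missing idea} is to use that $R$ is regular. Then $R$ contains some $\beta$ with $\alpha\beta\alpha=\alpha$, and you show that $\beta\notin N$.
\begin{itemize}
\item Pick a class $A_0$ with $A_0\cap X\alpha=\emptyset$ and some $a\in A_0$. Then $a\beta\alpha$ lies in a class $D$ meeting $X\alpha$. Its preimage $C=D\alpha^{-1}$ is a single class, so $a\beta\in C$ and $A_0\beta\subseteq C$.
\item Every $w\in X\alpha$ satisfies $w\beta\alpha=w$. For $w\in D\cap X\alpha$ this gives $w\beta\in C$, hence $D\beta\subseteq C$. Since $A_0\neq D$, the preimage $C\beta^{-1}\supseteq A_0\cup D$ is not a class.
\item Choose $w\in D\cap X\alpha$ with $w\neq a\beta\alpha$, which is possible since $|D\cap X\alpha|\geq 2$. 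Then $w\beta\neq a\beta$, for otherwise $w=w\beta\alpha=a\beta\alpha$. Thus $|C\cap X\beta|\geq 2$.
\end{itemize}
So $\beta$ violates condition~(2), and Lemma~\ref{lem: not regular by element in L} applied to $\beta$ gives $\gamma\in L\subseteq R$ with $\gamma\beta\in R$ non-regular, the required contradiction. This is how the paper handles the case: the non-regular element comes from an inverse of $\alpha$ inside $R$, not from products of $\alpha$ itself.
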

\begin{proof}
	We prove that $\reg(T_{E^*}(X))\cup L$ is a subsemigroup of $T_E(X)$. Let $\alpha\in \reg(T_{E^*}(X))$ and $\beta\in L$. There exists $A\in X/E$ such that $|A\cap X\beta|=1$. We obtain $|A\cap X\alpha\beta|\leq|A\cap X\beta|=1$. Moreover, there is an $E$-class $B\subseteq A\beta^{-1}$ since $A\beta^{-1}$ is a union of $E$-classes. Hence $B\alpha^{-1}=C$ for some $E$-class $C$ since $\alpha\in \reg(T_{E^*}(X))$. We have
	\[
		\emptyset\neq C\alpha\beta\subseteq B\beta\cap X\alpha\beta\subseteq A\cap X\alpha\beta,
	\]
	and so $|A\cap X\alpha\beta|=1$. Thus $\alpha\beta$ satisfies condition~(1) of Lemma~\ref{lem: L regular subsemigroup}.

	To verify condition~(2) of Lemma~\ref{lem: L regular subsemigroup}, let $A\in X/E$ be such that $|A\cap X\alpha\beta|>1$. Then $|A\cap X\beta|\geq|A\cap X\alpha\beta|>1$, which implies that $A\beta^{-1}=B$ for some $E$-class $B$. Since $\alpha\in \reg(T_{E^*}(X))$, there exists $C\in X/E$ such that $B\alpha^{-1}=C$. Hence
	\[
		A(\alpha\beta)^{-1}=A\beta^{-1}\alpha^{-1}=B\alpha^{-1}=C\in X/E,
	\]
	from which it follows that $\alpha\beta$ satisfies condition~(2) of Lemma~\ref{lem: L regular subsemigroup}, as required. Hence $\alpha\beta\in L\subseteq \reg(T_{E^*}(X))\cup L$.

	Next, we show that $\beta\alpha \in L$. Since $\beta\in L$, there exists $A\in X/E$ such that $|A\cap X\beta|=1$. Let $A\cap X\beta=\{x\beta\}$ for some $x\in X$. We obtain $x\beta\alpha\in A\alpha\cap X\beta\alpha\subseteq B\cap X\beta\alpha$ for some $B\in X/E$. Suppose for contradiction that there exists $y\beta\alpha\in B\cap X\beta\alpha$ such that $x\beta\alpha\neq y\beta\alpha$. Then $|B\cap X\alpha|\geq|B\cap X\beta\alpha|>1$ and hence $B\alpha^{-1}=A$. We see that $x\beta,y\beta\in B\alpha^{-1}\cap X\beta=A\cap X\beta$. Thus $x\beta=y\beta$ since $|A\cap X\beta|=1$. So $x\beta\alpha=y\beta\alpha$, a contradiction. Therefore, $|B\cap X\beta\alpha|=1$, from which it follows that $\beta\alpha$ satisfies condition~(1) of Lemma~\ref{lem: L regular subsemigroup}.

	To verify condition~(2) of Lemma~\ref{lem: L regular subsemigroup}, let $A\in X/E$ be such that $|A\cap X\beta\alpha|>1$. Then there exist two distinct elements $x\beta\alpha$ and $y\beta\alpha$ in $A\cap X\beta\alpha$. Since $\alpha\in \reg(T_{E^*}(X))$, we have $A\alpha^{-1}=B$ for some $E$-class $B$. Hence $x\beta,y\beta\in A\alpha^{-1}\cap X\beta=B\cap X\beta$, which implies that $|B\cap X\beta|>1$. Then $B\beta^{-1}=C$ for some $C\in X/E$. We obtain
	\[
		A(\beta\alpha)^{-1}=A\alpha^{-1}\beta^{-1}=B\beta^{-1}=C\in X/E,
	\]
	and so $\beta\alpha$ satisfies condition~(2) of Lemma~\ref{lem: L regular subsemigroup}. Therefore, $\beta\alpha\in L$. Thus $\reg(T_{E^*}(X))\cup L$ is closed under composition. Since every element in $\reg(T_{E^*}(X))$ and $L$ is regular, it follows that $\reg(T_{E^*}(X))\cup L$ is a regular subsemigroup of $T_E(X)$. Moreover, since $E$ is neither the identity relation nor the universal relation on $X$, the semigroup $T_E(X)$ is not regular, so $\reg(T_{E^*}(X))\cup L \subseteq \reg(T_E(X)) \subsetneq T_E(X)$, which shows that $\reg(T_{E^*}(X))\cup L$ is a proper subsemigroup of $T_E(X)$.

	Finally, we prove the maximality. Let $S$ be a proper regular subsemigroup of $T_E(X)$ such that $\reg(T_{E^*}(X))\cup L\subseteq S \subsetneq T_E(X)$. Suppose for contradiction that $\reg(T_{E^*}(X))\cup L \neq S$, and pick $\alpha\in S\setminus(\reg(T_{E^*}(X))\cup L)$. Since $\alpha\notin L$, $\alpha$ fails condition~(1) or condition~(2) of Lemma~\ref{lem: L regular subsemigroup}. We consider the following two cases:

	\textbf{Case 1:} $\alpha$ fails condition~(2) of Lemma~\ref{lem: L regular subsemigroup}. Then there exists a class $A\in X/E$ such that $|A\cap X\alpha|>1$ and $A\alpha^{-1}\notin X/E$. By Lemma~\ref{lem: not regular by element in L}, there exists $\beta\in L\subseteq S$ such that $\beta\alpha$ is not regular in $T_E(X)$, contradicting the assumption that $S$ is regular.

	\textbf{Case 2:} $\alpha$ satisfies condition~(2) of Lemma~\ref{lem: L regular subsemigroup}, but fails condition~(1). Since $\alpha$ fails condition~(1), we have $|B\cap X\alpha|\neq 1$ for all $B\in X/E$. Since $\alpha\notin \reg(T_{E^*}(X))$, there exists an $E$-class $A_0\in X/E$ such that $A_0\alpha^{-1}\notin X/E$. Because $\alpha$ satisfies condition~(2), any $B\in X/E$ with $|B\cap X\alpha|>1$ must satisfy $B\alpha^{-1}\in X/E$. Thus, we must have $A_0\cap X\alpha = \emptyset$. Since $\alpha\in S$ and $S$ is regular, there exists $\beta\in S$ such that $\alpha=\alpha\beta\alpha$. Let
	\[
		\mathcal{C}=\{B\in X/E : B\cap X\alpha \neq \emptyset\}.
	\]
	Clearly, $\mathcal{C}\neq\emptyset$. For each $B\in\mathcal{C}$, we have $|B\cap X\alpha|>1$, so $B\alpha^{-1}\in X/E$. For any $x\alpha\in B\cap X\alpha$, $x\alpha\beta\alpha=x\alpha\in B$, so $x\alpha\beta\in B\alpha^{-1}\in X/E$. Thus $B\beta\subseteq B\alpha^{-1}$ for all $B\in\mathcal{C}$.

	Fix $a\in A_0$, and let $C = [a\beta]_E \in X/E$. Since $a\beta\alpha \in D\cap X\alpha$ for some $D\in\mathcal{C}$, we have $a\beta\in D\alpha^{-1}$. Since $D\in\mathcal{C}$, the preimage $D\alpha^{-1}$ is an $E$-class, which implies $D\alpha^{-1} = [a\beta]_E = C$. Then $a\beta\alpha\beta \in D\beta \subseteq D\alpha^{-1} = C$. Thus both $A_0\beta$ and $D\beta$ are contained in $C$, which implies $C\beta^{-1}\notin X/E$. Moreover, since $D\in\mathcal{C}$, we have $|D\cap X\alpha|>1$, so there exists an element $w\in D\cap X\alpha$ such that $w \neq a\beta\alpha$; write $w = b\alpha$ for some $b\in X$. If $a\beta=b\alpha\beta$, then $a\beta\alpha=b\alpha\beta\alpha=b\alpha=w$, a contradiction. Hence $a\beta\neq b\alpha\beta$. Furthermore, $b\alpha\beta \in D\beta\subseteq C$, so $a\beta, b\alpha\beta \in C\cap X\beta$, which gives $|C\cap X\beta|>1$. By Lemma~\ref{lem: not regular by element in L}, there exists $\gamma\in L\subseteq S$ such that $\gamma\beta$ is not regular in $T_E(X)$, which again contradicts the assumption that $S$ is regular.

	Therefore, $\reg(T_{E^*}(X))\cup L=S$, showing that $\reg(T_{E^*}(X))\cup L$ is a maximal regular subsemigroup of $T_E(X)$.
\end{proof}

\begin{remark}\label{rem: Reg TEstar union L contains units}
	We note that $G_E(X) \subseteq \reg(T_{E^*}(X)) \subseteq \reg(T_{E^*}(X))\cup L$, so the maximal regular subsemigroup $\reg(T_{E^*}(X))\cup L$ contains the entire group of units $G_E(X)$.
\end{remark}

Next, we construct another maximal regular subsemigroup containing $G_E(X)$, beginning with the following lemma for the set $M$.

\begin{lemma}\label{lem: reg M regular subsemigroup}
	The set
	\[
		M = \{\alpha\in \reg(T_E(X)) : X\alpha\subseteq A \text{ for some } A\in X/E\}
	\]
	is a regular subsemigroup of $T_E(X)$.
\end{lemma}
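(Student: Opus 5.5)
Let $\alpha\in M$ with $X\alpha\subseteq A$. By Theorem~\ref{thm: reg} there is a class $B$ with $A\cap X\alpha\subseteq B\alpha$. Since $X\alpha\subseteq A$, this says $X\alpha=B\alpha$: the elements of $M$ are exactly those $\alpha\in T_E(X)$ whose image lies in a single $E$-class and is attained on a single $E$-class. The plan is to prove closure using this description, then build an inverse inside $M$ by hand.

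\textbf{Closure.} Let $\alpha,\beta\in M$ with $X\alpha\subseteq A$, $X\alpha=B\alpha$, $X\beta\subseteq C$ and $X\beta=D\beta$. Then $X\alpha\beta\subseteq X\beta\subseteq C$, so the image of $\alpha\beta$ lies in one class. For the regularity witness, I expect $B$ itself to work, since
\[
B\alpha\beta=(X\alpha)\beta=X\alpha\beta .
\]
Hence $C\cap X\alpha\beta=X\alpha\beta=B(\alpha\beta)$, and $\alpha\beta$ is regular by Theorem~\ref{thm: reg}. Therefore $\alpha\beta\in M$.

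\textbf{Regularity inside $M$.} This is the step needing care, because the inverse must be constructed within $M$, not merely in $T_E(X)$. Given $\alpha\in M$ as above, for each $y\in X\alpha$ choose $b_y\in B$ with $b_y\alpha=y$. This is possible because $X\alpha=B\alpha$. Fix one $y_0\in X\alpha$ and define
\[
y\beta=b_y \ (y\in X\alpha), \qquad x\beta=b_{y_0} \ (x\notin X\alpha).
\]
Then $X\beta\subseteq B$, so every class is mapped into $B$ and $\beta\in T_E(X)$. For each $x\in X$ we get $x\alpha\beta\alpha=b_{x\alpha}\alpha=x\alpha$, so $\alpha\beta\alpha=\alpha$.

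It remains to show $\beta\in M$. The image of $\beta$ lies in the single class $B$, so by Theorem~\ref{thm: reg} it suffices to find a class whose $\beta$-image is all of $X\beta$. Take $A$: since $X\alpha\subseteq A$ and $b_{y_0}\in\{b_y : y\in X\alpha\}$, we have
\[
A\beta=\{b_y : y\in X\alpha\}=X\beta .
\]
Hence $\beta$ is regular and $\beta\in M$.

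The main point, on which the rest depends, is the reformulation of regularity as ``$X\alpha=B\alpha$ for some class $B$'' for maps whose image lies in one class. The second point to check is the default value $b_{y_0}$ on $X\setminus X\alpha$, which keeps $X\beta\subseteq B$ and guarantees $A\beta=X\beta$.
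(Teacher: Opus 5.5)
Your proof is correct and follows essentially the same route as the paper. The paper proves closure by pushing the witness class for $\alpha$ through $\beta$, and proves regularity inside $M$ with the same inverse you build: each image point goes to a chosen preimage in the witness class, and the remaining points go to one fixed such preimage. That inverse has image in a single class, realized as the image of the class containing $X\alpha$. Your reformulation $X\alpha=B\alpha$ is just the paper's use of Theorem~\ref{thm: reg}, stated once at the start.
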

\begin{proof}
	First, we prove that $M$ is a subsemigroup of $T_E(X)$. Let $\alpha,\beta\in M$. Then $\alpha,\beta\in\reg(T_E(X))$, and $X\alpha\subseteq B$ for some $E$-class $B\in X/E$. Since $\beta\in T_E(X)$, we have $X\alpha\beta\subseteq B\beta\subseteq A_0$ for some $E$-class $A_0\in X/E$. To show $\alpha\beta\in\reg(T_E(X))$, we use Theorem~\ref{thm: reg}. Let $A\in X/E$. If $A\cap X\alpha\beta = \emptyset$, then $A\cap X\alpha\beta \subseteq C\alpha\beta$ holds trivially for any $C\in X/E$. If $A\cap X\alpha\beta \neq \emptyset$, then $A = A_0$, and $A_0\cap X\alpha\beta = X\alpha\beta = (X\alpha)\beta = (B\cap X\alpha)\beta$. Since $\alpha\in\reg(T_E(X))$, Theorem~\ref{thm: reg} implies that there exists an $E$-class $C\in X/E$ such that $B\cap X\alpha \subseteq C\alpha$. Thus
	\[
		A_0\cap X\alpha\beta = (B\cap X\alpha)\beta \subseteq (C\alpha)\beta = C\alpha\beta.
	\]
	By Theorem~\ref{thm: reg}, $\alpha\beta\in\reg(T_E(X))$. Hence $\alpha\beta\in M$, which proves that $M$ is a subsemigroup of $T_E(X)$.

	Next, we show that every element $\alpha\in M$ is regular in $M$. Let $\alpha\in M$. Then $X\alpha = \{a_i : i\in I\} \subseteq B$ for some $E$-class $B\in X/E$, where $\alpha$ is represented by
	\[
		\alpha = \left(
		\begin{array}{c}
		A_i\\
		a_i\\
		\end{array}
		\right).
	\]
	Since $\alpha\in\reg(T_E(X))$, Theorem~\ref{thm: reg} yields an $E$-class $C\in X/E$ such that $B\cap X\alpha = \{a_i : i\in I\} \subseteq C\alpha$. Thus for each $i\in I$, there exists $c_i\in C$ such that $a_i = c_i\alpha$. Fix an index $i_0\in I$, and let $Z = X\setminus\{a_i : i\in I\}$. Define a transformation $\beta: X \to X$ by
	\[
		\beta = \left(
		\begin{array}{cc}
			a_i & Z       \\
			c_i & c_{i_0} \\
		\end{array}
		\right).
	\]
	Note that $X\beta = \{c_i : i\in I\} \subseteq C \in X/E$, so $(x\beta, y\beta)\in E$ for all $x,y\in X$, which proves that $\beta\in T_E(X)$ and $X\beta\subseteq C$. Furthermore, $C\cap X\beta = \{c_i : i\in I\} \subseteq B\beta$ because $\{a_i : i\in I\} \subseteq B$ and $a_i\beta = c_i$ for all $i\in I$. By Theorem~\ref{thm: reg}, $\beta\in\reg(T_E(X))$, so $\beta\in M$. Finally, for any $x\in X$, $x\in A_i$ for some $i\in I$, so $x\alpha = a_i$, which gives
	\[
		x\alpha\beta\alpha = a_i\beta\alpha = c_i\alpha = a_i = x\alpha.
	\]
	Since $x\in X$ was arbitrary, $\alpha = \alpha\beta\alpha$. Therefore, $M$ is a regular subsemigroup of $T_E(X)$.
\end{proof}

Since the image of each element in $M$ is contained in a single $E$-class, $M$ does not contain $G_E(X)$. To include the group of units, we introduce the set $Q$ and show that it is a regular subsemigroup.

\begin{lemma}\label{lem: Q regular subsemigroup}
	Let
	\[
		\Omega=\{A\in X/E : |A|\geq 2\}
	\]
	and
	\[
		Q=\{\alpha\in T_E(X) : \alpha|_{\bigcup\Omega}\in\Sym(\textstyle\bigcup\Omega) \text{ and } A\alpha\in\Omega \text{ for all } A\in\Omega\}.
	\]
	Then $Q$ is a regular subsemigroup of $T_E(X)$.
\end{lemma}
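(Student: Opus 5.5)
Write $U=\bigcup\Omega$ and $V=X\setminus U$, so $V$ is the union of the singleton $E$-classes. The proof has two parts: closure of $Q$ under composition, and the construction of an inverse inside $Q$ for each element.

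\textbf{Closure.} Let $\alpha,\beta\in Q$. Since $\alpha|_U\in\Sym(U)$, we have $U\alpha=U$, and likewise $U\beta=U$. Hence $(\alpha\beta)|_U=\alpha|_U\beta|_U$ is a composite of two permutations of $U$, so it lies in $\Sym(U)$. For $A\in\Omega$, the set $A\alpha$ is an element of $\Omega$, and therefore $(A\alpha)\beta\in\Omega$. Finally, $\alpha\beta\in T_E(X)$ because $T_E(X)$ is a semigroup. So $\alpha\beta\in Q$.

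\textbf{Inverse: the permutation part.} Fix $\alpha\in Q$. The map $\alpha|_U$ is a permutation of $U$ sending each class of $\Omega$ onto a class of $\Omega$. We first check that the induced map on $\Omega$ is injective. If $A\alpha=B\alpha$ with $A\neq B$, then $\alpha|_U$ would fail to be injective, which is impossible. Combined with the bijectivity of $\alpha|_U$, this means the induced map $\Omega\to\Omega$ is a bijection. It follows that $\gamma:=(\alpha|_U)^{-1}$ also sends each class of $\Omega$ onto a class of $\Omega$.

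\textbf{Inverse: the singleton part.} The points of $V$ may be sent by $\alpha$ anywhere. Note that $\{v\}\alpha$ is automatically contained in an $E$-class, so no condition is imposed on where $V$ goes. Set $V_0=V\cap X\alpha$. For each $y\in V_0$ choose $x_y\in V$ with $x_y\alpha=y$; such an $x_y$ must lie in $V$, because $U\alpha=U$. Now define $\beta$ by
\[
\beta|_U=\gamma,\qquad y\beta=x_y \text{ for } y\in V_0,\qquad v\beta=v \text{ for } v\in V\setminus V_0.
\]
The map $\beta$ preserves $E$. Each class in $\Omega$ is mapped by $\gamma$ onto a class, and every other class is a singleton, whose image is trivially inside one class.

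\textbf{Verification.} By construction $\beta|_U\in\Sym(U)$ and $A\beta\in\Omega$ for all $A\in\Omega$, so $\beta\in Q$. It remains to check $\alpha\beta\alpha=\alpha$ pointwise. For $x\in U$, we have $x\alpha\in U$, so $x\alpha\beta\alpha=x\alpha\gamma\alpha=x\alpha$. For $x\in V$, either $x\alpha\in U$, and the same computation applies, or $x\alpha=y\in V_0$, in which case $y\beta\alpha=x_y\alpha=y$.

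The only step that needs real care is the bijectivity of the induced map on $\Omega$, which is what makes $\gamma$ preserve $E$. When $\Omega$ is infinite, surjectivity cannot be obtained by counting. It must instead come from the surjectivity of $\alpha|_U$: each class $B\in\Omega$ meets $U\alpha$, and any class $A\in\Omega$ that $\alpha$ sends into $B$ must in fact have $A\alpha=B$.
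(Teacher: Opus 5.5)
Your proposal is correct and follows essentially the same route as the paper. Closure comes from composing the restrictions to $\bigcup\Omega$. The inverse $\beta$ is $(\alpha|_{\bigcup\Omega})^{-1}$ on $\bigcup\Omega$, together with chosen preimages on the singleton classes that meet $X\alpha$. The only differences are cosmetic: you send the remaining singletons to themselves, where the paper sends them to an arbitrary point, and you explicitly justify why the induced map on $\Omega$ is a bijection, which the paper only asserts.
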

\begin{proof}
	First, we show that $Q$ is a subsemigroup of $T_E(X)$. Let $\alpha,\beta\in Q$. Since $\alpha|_{\bigcup\Omega}$ and $\beta|_{\bigcup\Omega}$ belong to $\Sym(\bigcup\Omega)$, their composition $(\alpha\beta)|_{\bigcup\Omega} = (\alpha|_{\bigcup\Omega})(\beta|_{\bigcup\Omega})$ also belongs to $\Sym(\bigcup\Omega)$. Moreover, for any $A\in\Omega$, we have $A\alpha\in\Omega$, which implies $A(\alpha\beta) = (A\alpha)\beta\in\Omega$. Hence $\alpha\beta\in Q$, proving that $Q$ is closed under composition.

	Next, we show that $Q$ is regular. Let $\alpha\in Q$. To construct an element $\beta\in Q$ satisfying $\alpha=\alpha\beta\alpha$, we define $\beta|_A$ for each $E$-class $A\in X/E$ as follows:

	\textbf{Case 1:} $A\in\Omega$. Since $\alpha|_{\bigcup\Omega}\in\Sym(\bigcup\Omega)$ and $\alpha$ maps $E$-classes in $\Omega$ to $E$-classes in $\Omega$, there exists a unique class $B\in\Omega$ such that $B\alpha = A$, and the restriction $\alpha|_B: B\to A$ is a bijection. Let $B=\{b_i : i\in I\}$ and $a_i = b_i\alpha$ for each $i\in I$, so $A=\{a_i : i\in I\}$. We define $\beta|_A: A\to B$ by $a_i\beta = b_i$ for all $i\in I$. Notice that $A\beta = B\in\Omega$ and $\beta|_A = (\alpha|_B)^{-1}$.

	\textbf{Case 2:} $A\notin\Omega$ and $A\cap X\alpha\neq\emptyset$. Since $A\notin\Omega$, we have $A=\{a\}$ for some $a\in X$. Since $a\in X\alpha$, the preimage $A\alpha^{-1}$ is nonempty. Choose a fixed element $b\in A\alpha^{-1}$, and define $\beta|_A: A\to X$ by $a\beta = b$.

	\textbf{Case 3:} $A\notin\Omega$ and $A\cap X\alpha=\emptyset$. Since $A=\{a\}$, choose an arbitrary element $c\in X$ and define $\beta|_A: A\to X$ by $a\beta = c$.

	Combining $\beta|_A$ over all $A\in X/E$ defines a transformation $\beta: X\to X$. We first show that $\beta\in T_E(X)$. Let $x,y\in X$ with $(x,y)\in E$. If $x,y\in A\in\Omega$, then $x\beta,y\beta\in A\beta = B\in\Omega$, so $(x\beta,y\beta)\in E$. If $x,y\in A\notin\Omega$, then $|A|=1$, which implies $x=y$ and thus $x\beta=y\beta$, so $(x\beta,y\beta)\in E$. Hence $\beta\in T_E(X)$. Furthermore, for each $A\in\Omega$, Case 1 gives $A\beta = B\in\Omega$ and $\beta|_{\bigcup\Omega} = \bigcup_{A\in\Omega}(\alpha|_{A\alpha^{-1}})^{-1}\in\Sym(\bigcup\Omega)$, which shows that $\beta\in Q$.

	Finally, we verify that $\alpha=\alpha\beta\alpha$. Let $x\in X$, and set $a = x\alpha$. We consider two cases: If $x\in \bigcup\Omega$, then $x\in B$ for some $B\in\Omega$. Let $A = B\alpha$. Since $\alpha\in Q$, we have $A\in\Omega$. By Case 1, $B = \{b_i : i\in I\}$ and $A = \{a_i : i\in I\}$ where $a_i = b_i\alpha$. Since $x\in B$, we have $x = b_i$ for some $i\in I$, so $x\alpha = a_i$. By Case 1, $a_i\beta = b_i$, which gives $x\alpha\beta\alpha = a_i\beta\alpha = b_i\alpha = a_i = x\alpha$. If $x\notin \bigcup\Omega$, let $A = [a]_E \in X/E$. If $A\in\Omega$, then $a\in A$, so Case 1 gives $a\beta\alpha = a(\alpha|_B)^{-1}\alpha = a = x\alpha$. If $A\notin\Omega$, then $A=\{a\}$ with $A\cap X\alpha\neq\emptyset$, so $A$ satisfies Case 2, where $a\beta = b\in A\alpha^{-1}$ (so $b\alpha = a$), giving $x\alpha\beta\alpha = a\beta\alpha = b\alpha = a = x\alpha$. In all cases, $x\alpha\beta\alpha = x\alpha$. Consequently, $\alpha=\alpha\beta\alpha$, proving that $Q$ is a regular subsemigroup of $T_E(X)$.
\end{proof}

\begin{remark}\label{rem: Q characterization}
	We observe that $Q=\{\alpha\in\reg(T_E(X)) : \alpha|_{\bigcup\Omega}\in\Sym(\bigcup\Omega)\}$. Indeed, if $\alpha\in Q$, then $\alpha\in\reg(T_E(X))$ by Lemma~\ref{lem: Q regular subsemigroup}. Conversely, suppose $\alpha\in\reg(T_E(X))$ with $\alpha|_{\bigcup\Omega}\in\Sym(\bigcup\Omega)$. For each $A\in\Omega$, since $\alpha\in T_E(X)$, there exists $B\in X/E$ such that $A\alpha\subseteq B$. Since $\alpha|_{\bigcup\Omega}$ is injective and $|A|\geq 2$, we have $|B|\geq |A\alpha| = |A| \geq 2$, so $B\in\Omega$. Suppose for contradiction that $A\alpha \neq B$. Then there exists $b\in B\setminus A\alpha$. Since $b\in\bigcup\Omega$ and $\alpha|_{\bigcup\Omega}$ is surjective on $\bigcup\Omega$, we have $b = c\alpha$ for some $c\in\bigcup\Omega\setminus A$. Since $\alpha\in\reg(T_E(X))$, Theorem~\ref{thm: reg} yields $D\in X/E$ such that $B\cap X\alpha\subseteq D\alpha$. Since $A\alpha\subseteq B\cap X\alpha\subseteq D\alpha$ and $|A\alpha|\geq 2$, we must have $D\in\Omega$ and $D=A$, which implies $D\alpha = A\alpha$. But then $b = c\alpha \in B\cap X\alpha \subseteq D\alpha = A\alpha$, a contradiction. Hence $A\alpha = B\in\Omega$ for all $A\in\Omega$, proving that $\alpha\in Q$.
\end{remark}

Having established the regularity of both $M$ and $Q$, we now consider their union $M \cup Q$. The following lemma shows that this union remains closed under composition and thus forms a regular subsemigroup of $T_E(X)$.

\begin{lemma}\label{lem: reg M union Q}
	The set $M\cup Q$ is a regular subsemigroup of $T_E(X)$.
\end{lemma}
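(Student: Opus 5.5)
Since $M$ and $Q$ are already regular subsemigroups by Lemma~\ref{lem: reg M regular subsemigroup} and Lemma~\ref{lem: Q regular subsemigroup}, every element of $M\cup Q$ has an inverse inside $M$ or inside $Q$. Regularity of $M\cup Q$ is therefore automatic once closure is shown. Closure under products within $M$ or within $Q$ is known, so it remains to prove that $\alpha\beta\in M$ and $\beta\alpha\in M$ whenever $\alpha\in M$ and $\beta\in Q$. In both cases the image is clearly contained in a single $E$-class. The real content is showing that the product is regular in $T_E(X)$, which we will check with the criterion of Theorem~\ref{thm: reg}. Because the image lies in one class, the criterion reduces to a single condition: if $X\gamma\subseteq A_0$, we need some $C\in X/E$ with $X\gamma\subseteq C\gamma$.

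\textbf{Case $\alpha\beta$.} Let $X\alpha\subseteq B$. Regularity of $\alpha$ gives $C$ with $X\alpha=B\cap X\alpha\subseteq C\alpha$. Applying $\beta$ gives $X\alpha\beta=(X\alpha)\beta\subseteq C\alpha\beta$. This is exactly the argument already used in Lemma~\ref{lem: reg M regular subsemigroup} for $\alpha\beta$, and it only requires $\beta\in T_E(X)$. So $\alpha\beta\in M$ for every $\beta\in T_E(X)$; that is, $M$ is a right ideal-like set inside $\reg(T_E(X))$.

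\textbf{Case $\beta\alpha$.} Here $X\beta\alpha\subseteq X\alpha\subseteq B$, and we need a class $D$ with $X\beta\alpha\subseteq D\beta\alpha$. The plan is to take $C$ as above, with $X\alpha\subseteq C\alpha$, and find a class $D$ with $C\subseteq D\beta$. Then $D\beta\alpha\supseteq C\alpha\supseteq X\alpha\supseteq X\beta\alpha$, as required. Finding $D$ uses the structure of $Q$:
\begin{itemize}
\item If $C\in\Omega$, then since $\beta$ permutes $\bigcup\Omega$ and maps classes of $\Omega$ onto classes of $\Omega$, there is a unique $D\in\Omega$ with $D\beta=C$.
\item If $C=\{c\}$ is a singleton and $c\in X\beta$, take $D=[x]_E$ for any $x$ with $x\beta=c$.
\end{itemize}

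The main obstacle is the remaining situation: $C=\{c\}$ is a singleton with $c\notin X\beta$. Then $X\alpha\subseteq C\alpha$ forces $|X\alpha|=1$, so $\alpha$ is constant. In that case $\beta\alpha$ is constant, and a constant map $\gamma$ with image $\{p\}$ is always regular, since $X\gamma=\{p\}=D\gamma$ for any class $D$. So this degenerate case is handled directly, bypassing the choice of $C$.

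Combining the cases, $M Q\cup QM\subseteq M$. Hence $M\cup Q$ is a subsemigroup, and since each element is regular within $M$ or within $Q$, it is a regular subsemigroup of $T_E(X)$.
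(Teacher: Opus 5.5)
Your proof is correct and follows essentially the same route as the paper. Both arguments reduce to the two cross products, push $X\alpha\subseteq C\alpha$ through $\beta$ for $\alpha\beta$, and use the bijection that $Q$ induces on $\Omega$ to find $D$ with $D\beta=C$ for $\beta\alpha$. The only difference is the singleton case: the paper directly picks a preimage of the single image point under the composite, whereas you note that $\alpha$ must then be constant. That observation in fact makes your middle bullet (the case $c\in X\beta$) unnecessary.
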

\begin{proof}
	Let $S = M\cup Q$. By Lemma~\ref{lem: reg M regular subsemigroup} and Lemma~\ref{lem: Q regular subsemigroup}, $M$ and $Q$ are regular subsemigroups of $T_E(X)$, so every element of $S$ is regular in $S$. Therefore, it suffices to prove that $S$ is closed under composition.

	Let $\alpha,\beta\in S$. If $\alpha,\beta\in M$ or $\alpha,\beta\in Q$, then $\alpha\beta\in S$ by Lemma~\ref{lem: reg M regular subsemigroup} and Lemma~\ref{lem: Q regular subsemigroup}, respectively. It remains to consider the two cross-cases:

	\textbf{Case 1:} $\alpha\in M$ and $\beta\in Q$. Since $\alpha\in M$, there exists an $E$-class $A\in X/E$ such that $X\alpha \subseteq A$, so $X\alpha = A\cap X\alpha$. Since $\beta \in T_E(X)$, we have $A\beta \subseteq B$ for some $E$-class $B\in X/E$, which gives $X\alpha\beta = (A\cap X\alpha)\beta \subseteq A\beta \subseteq B$. To show that $\alpha\beta\in\reg(T_E(X))$, let $U\in X/E$. If $U \neq B$, then $U\cap X\alpha\beta = \emptyset \subseteq V\alpha\beta$ holds trivially for any $V\in X/E$. If $U = B$, then $B\cap X\alpha\beta = X\alpha\beta = (A\cap X\alpha)\beta$. Since $\alpha\in\reg(T_E(X))$, Theorem~\ref{thm: reg} yields an $E$-class $C\in X/E$ such that $A\cap X\alpha \subseteq C\alpha$. Applying $\beta$, we obtain
	\[
		B\cap X\alpha\beta = (A\cap X\alpha)\beta \subseteq (C\alpha)\beta = C\alpha\beta.
	\]
	By Theorem~\ref{thm: reg}, $\alpha\beta\in\reg(T_E(X))$, so $\alpha\beta\in M \subseteq S$.

	\textbf{Case 2:} $\alpha\in Q$ and $\beta\in M$. Since $\beta\in M$, there exists $A\in X/E$ such that $X\beta \subseteq A$. Then $X\alpha\beta \subseteq X\beta \subseteq A$. Thus for any $U \neq A$, $U\cap X\alpha\beta = \emptyset$. For $U = A$, we have $A\cap X\alpha\beta = X\alpha\beta \subseteq A\cap X\beta$. Since $\beta\in\reg(T_E(X))$, Theorem~\ref{thm: reg} yields an $E$-class $B\in X/E$ such that $A\cap X\beta \subseteq B\beta$. We consider two cases for $B$.
	If $|B|=1$, say $B=\{b\}$, then $B\beta = \{b\beta\}$, so $A\cap X\alpha\beta \subseteq A\cap X\beta \subseteq B\beta = \{b\beta\}$. If $A\cap X\alpha\beta = \emptyset$, then $A\cap X\alpha\beta \subseteq C\alpha\beta$ holds trivially for any $C\in X/E$. If $A\cap X\alpha\beta \neq \emptyset$, then $A\cap X\alpha\beta = \{b\beta\}$; choosing an element $x\in X$ such that $x\alpha\beta = b\beta$ and letting $C = [x]_E \in X/E$, we have $b\beta = x\alpha\beta \in C\alpha\beta$, which gives $A\cap X\alpha\beta = \{b\beta\} \subseteq C\alpha\beta$.
	If $|B|\geq 2$, then $B\in\Omega$. Since $\alpha\in Q$, $\alpha|_{\bigcup\Omega}$ maps $\Omega$ bijectively onto $\Omega$, so there exists a unique class $D\in\Omega$ such that $D\alpha = B$. Therefore,
	\[
		A\cap X\alpha\beta \subseteq A\cap X\beta \subseteq B\beta = (D\alpha)\beta = D\alpha\beta.
	\]
	In either case, Theorem~\ref{thm: reg} implies that $\alpha\beta\in\reg(T_E(X))$, and hence $\alpha\beta\in M \subseteq S$.

	In all cases, $\alpha\beta\in S$, proving that $S$ is a regular subsemigroup of $T_E(X)$.
\end{proof}

To prove the maximality of $M \cup Q$, we need the following lemma.

\begin{lemma}\label{lem: reg onto}
	Let $\alpha\in\reg(T_E(X))\setminus M$. If $\bigcup\Omega\setminus\im(\alpha|_{\bigcup\Omega})\neq\emptyset$, then there exists $\beta\in M$ such that $\alpha\beta\notin\reg(T_E(X))$.
\end{lemma}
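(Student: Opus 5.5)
The plan is to build $\beta\in M$ whose image is a two-point set $\{p,q\}$ inside a single $E$-class. We arrange that $X\alpha\beta=\{p,q\}$ while no single $E$-class $V$ satisfies $V\alpha\beta=\{p,q\}$. Then Theorem~\ref{thm: reg} fails for $\alpha\beta$ at the class containing $p,q$.

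\textbf{Setup.} By hypothesis there is $z\in\bigcup\Omega$ with $z\notin(\bigcup\Omega)\alpha$. Let $D=[z]_E\in\Omega$ and choose $q\in D\setminus\{z\}$. Put $p=z$, so $\{p,q\}\subseteq D$. Since $\alpha\notin M$, the image $X\alpha$ meets at least two distinct $E$-classes. The key observation is that $z$ is not the image of any point of a class in $\Omega$. Hence if $z\alpha^{-1}\neq\emptyset$, every class $V$ with $z\in V\alpha$ is a singleton, and so $V\alpha=\{z\}$.

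\textbf{Definition of $\beta$.} I split according to whether $z\in X\alpha$.
\begin{itemize}
\item If $z\in X\alpha$, define $x\beta=p$ for $x=z$ and $x\beta=q$ otherwise.
\item If $z\notin X\alpha$, choose an $E$-class $C_1\neq D$ with $C_1\cap X\alpha\neq\emptyset$; this is possible since $X\alpha$ meets two classes. Define $x\beta=p$ for $x\in C_1\cup\{z\}$ and $x\beta=q$ otherwise.
\end{itemize}

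\textbf{$\beta\in M$.} In both cases $X\beta=\{p,q\}\subseteq D$, so $\beta\in T_E(X)$ trivially. Moreover $D\beta=\{p,q\}$, because $z\beta=p$ and every point of $D\setminus\{z\}$ goes to $q$; in the second case this uses $C_1\neq D$. Thus $D\cap X\beta\subseteq D\beta$, while every other class meets $X\beta$ in the empty set. By Theorem~\ref{thm: reg}, $\beta$ is regular, hence $\beta\in M$.

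\textbf{$X\alpha\beta=\{p,q\}$.} In the first case $p$ is attained because $z\in X\alpha$. The point $q$ is attained because $X\alpha$ meets some class other than $[z]_E=D$, and that point is $\neq z$. In the second case $p$ is attained via $C_1\cap X\alpha\neq\emptyset$. The point $q$ is attained because $X\alpha$ meets a class $C_2\neq C_1$ at a point which is not $z$, as $z\notin X\alpha$.

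\textbf{No class $V$ has $V\alpha\beta\supseteq\{p,q\}$.} Take any $V\in X/E$; then $V\alpha$ lies in a single class.
\begin{itemize}
\item In the first case, if $p\in V\alpha\beta$ then $z\in V\alpha$. By the key observation $V\alpha=\{z\}$, so $V\alpha\beta=\{p\}$.
\item In the second case, if $p\in V\alpha\beta$ then $V\alpha$ meets $C_1\cup\{z\}$. Since $z\notin X\alpha$, $V\alpha$ meets $C_1$ and therefore $V\alpha\subseteq C_1$. Hence $V\alpha\beta=\{p\}$.
\end{itemize}
Either way $D\cap X\alpha\beta=\{p,q\}\not\subseteq V\alpha\beta$ for every $V$. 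By Theorem~\ref{thm: reg}, $\alpha\beta\notin\reg(T_E(X))$.

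The main obstacle is the case analysis guaranteeing that both $p$ and $q$ actually occur in $X\alpha\beta$. This is why the second case needs the auxiliary class $C_1$: the point $z$ itself may lie outside $X\alpha$. The key observation about preimages of $z$ is what prevents one class from producing both values.
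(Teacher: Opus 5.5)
Your proof is correct. In the case $z\notin X\alpha$, your $\beta$ (sending $C_1\cup\{z\}$ to $z$ and everything else to $q$) is exactly the paper's map. The difference is in how the case $z\in X\alpha$ is handled.

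The paper rules that case out in advance. It takes $A=[a]_E$ for a point $a\in\bigcup\Omega\setminus\im(\alpha|_{\bigcup\Omega})$ and uses the regularity of $\alpha$ (Theorem~\ref{thm: reg}) to show $A\not\subseteq X\alpha$. Otherwise $A\subseteq B\alpha$ for some class $B$, which forces $|B|\ge 2$ and hence $a\in(\bigcup\Omega)\alpha$. It then picks $z\in A\setminus X\alpha$, so a single construction suffices.

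You instead keep the original $z$. When $z\in X\alpha$, you use the simpler map that fixes $z$ and sends everything else to $q$. Your observation that every preimage of $z$ lies in a singleton class then guarantees that any $V\alpha\beta$ containing $z$ equals $\{z\}$.

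What your route buys is generality. Regularity of $\alpha$ enters only to read $\alpha\notin M$ as ``$X\alpha$ meets at least two $E$-classes,'' so your argument works for any $\alpha\in T_E(X)$ with that property. The paper's route buys a single uniform construction, at the cost of the preliminary regularity argument.
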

\begin{proof}
	Suppose $\bigcup\Omega\setminus\im(\alpha|_{\bigcup\Omega})\neq\emptyset$, and choose $a\in\bigcup\Omega\setminus\im(\alpha|_{\bigcup\Omega})$. Let $A = [a]_E \in\Omega$, so $|A|\geq 2$. 

	We first show that $A\setminus X\alpha\neq\emptyset$. Suppose for contradiction that $A\subseteq X\alpha$. Since $\alpha\in\reg(T_E(X))$, Theorem~\ref{thm: reg} yields an $E$-class $B\in X/E$ such that $A = A\cap X\alpha \subseteq B\alpha$. Since $|A|\geq 2$, we have $|B|\geq |B\alpha|\geq |A|\geq 2$, so $B\in\Omega$ and hence $B\subseteq\bigcup\Omega$. As $a\in A\subseteq B\alpha$, there exists $b_0\in B\subseteq\bigcup\Omega$ such that $b_0\alpha = a$, which implies $a\in\im(\alpha|_{\bigcup\Omega})$, a contradiction. Thus, $A\setminus X\alpha\neq\emptyset$.

	Choose $z\in A\setminus X\alpha$, and pick $w\in A\setminus\{z\}$ (which exists since $|A|\geq 2$). Since $\alpha\notin M$, the image $X\alpha$ is not contained in $A$, so there exists an $E$-class $C\in X/E$ such that $C\neq A$ and $C\cap X\alpha\neq\emptyset$. Define a transformation $\beta: X\to X$ by
	\[
		\beta = \left(
		\begin{array}{cc}
			C\cup\{z\} & X\setminus(C\cup\{z\}) \\
			z          & w                      \\
		\end{array}
		\right).
	\]
	Since $X\beta = \{z,w\}\subseteq A$, we have $(x\beta,y\beta)\in E$ for all $x,y\in X$, so $\beta\in T_E(X)$. Moreover, since $z\in C\cup\{z\}$ and $w\in A\setminus\{z\}\subseteq X\setminus(C\cup\{z\})$, we obtain $A\beta = \{z,w\} = X\beta$. Thus $A\cap X\beta \subseteq A\beta$, so Theorem~\ref{thm: reg} implies $\beta\in\reg(T_E(X))$, and hence $\beta\in M$.

	To show that $\alpha\beta\notin\reg(T_E(X))$, note that since $C\cap X\alpha\neq\emptyset$, there exists $u\in C\cap X\alpha$, so $u\beta = z$. Furthermore, since $\alpha\notin M$, $X\alpha\not\subseteq C$, so there exists $v\in X\alpha\setminus C$. Since $z\notin X\alpha$, we have $v\neq z$, which implies $v\in X\setminus(C\cup\{z\})$ and thus $v\beta = w$. Therefore, $X\alpha\beta = \{z,w\}\subseteq A$, and so $A\cap X\alpha\beta = \{z,w\}$. 

	Now let $D\in X/E$. Since $\alpha\in T_E(X)$, we have $D\alpha\subseteq K$ for some $E$-class $K\in X/E$. If $K = C$, then $D\alpha\subseteq C \subseteq C\cup\{z\}$, so $D\alpha\beta = \{z\}$. If $K = A$, then since $z\notin X\alpha$, we have $D\alpha\subseteq A\setminus\{z\}\subseteq X\setminus(C\cup\{z\})$, so $D\alpha\beta = \{w\}$. If $K\notin\{A,C\}$, then $K\cap(C\cup\{z\}) = \emptyset$, so $D\alpha\subseteq X\setminus(C\cup\{z\})$, which gives $D\alpha\beta = \{w\}$. In all cases, $|D\alpha\beta| = 1$. If $\alpha\beta\in\reg(T_E(X))$, then Theorem~\ref{thm: reg} would yield an $E$-class $D\in X/E$ such that $\{z,w\} = A\cap X\alpha\beta \subseteq D\alpha\beta$, which contradicts $|D\alpha\beta| = 1$. Therefore, $\alpha\beta\notin\reg(T_E(X))$.
\end{proof}

Now, we are ready to prove our second main theorem.

\begin{theorem}\label{thm: M union Q maximal}
	Let $E$ be an equivalence relation on $X$ that is neither the identity relation nor the universal relation on $X$. The set $M\cup Q$ is a maximal regular subsemigroup of $T_E(X)$.
\end{theorem}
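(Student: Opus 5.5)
By Lemma~\ref{lem: reg M union Q}, $M\cup Q$ is a regular subsemigroup of $T_E(X)$. The plan is to follow the pattern of Theorem~\ref{thm: Reg TEstar union L maximal}: first show that $M\cup Q$ is proper, then show that nothing can be added to it while keeping regularity.

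Properness is immediate. Since $E$ is neither the identity nor the universal relation, $T_E(X)$ is not regular by Pei's result. Hence $M\cup Q\subseteq\reg(T_E(X))\subsetneq T_E(X)$.

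For maximality, let $S$ be a regular subsemigroup with $M\cup Q\subseteq S\subsetneq T_E(X)$, and suppose $\alpha\in S\setminus(M\cup Q)$. Then $\alpha$ is regular, so $\alpha\in\reg(T_E(X))\setminus M$. By Remark~\ref{rem: Q characterization}, $\alpha\notin Q$ means exactly that $\alpha|_{\bigcup\Omega}\notin\Sym(\bigcup\Omega)$. I split into two cases.

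\textbf{Case 1:} $\alpha|_{\bigcup\Omega}$ is not onto $\bigcup\Omega$, i.e.\ $\bigcup\Omega\setminus\im(\alpha|_{\bigcup\Omega})\neq\emptyset$. Lemma~\ref{lem: reg onto} gives $\beta\in M\subseteq S$ with $\alpha\beta$ not regular. This contradicts the regularity of $S$.

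\textbf{Case 2:} $\alpha|_{\bigcup\Omega}$ is onto $\bigcup\Omega$ but does not map $\bigcup\Omega$ into itself bijectively. Then either some $x\in\bigcup\Omega$ has $x\alpha\notin\bigcup\Omega$, or two distinct points of $\bigcup\Omega$ have the same image. In both situations I aim to produce an element of $\reg(T_E(X))\setminus M$ whose restriction to $\bigcup\Omega$ misses a point of $\bigcup\Omega$, and then apply Case 1 to it.
\begin{enumerate}[label=(\roman*)]
\item If $X$ is finite, or more generally whenever surjectivity forces injectivity, this situation cannot occur for finite $\bigcup\Omega$.
\item In general, I first try to replace $\alpha$ by $\alpha^2$, or by $\alpha\gamma$ with a suitable $\gamma\in Q$ (a permutation in $G_E(X)$, or an element of $Q$ collapsing singleton classes). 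The goal is to keep the product in $S\setminus M$ while losing surjectivity onto $\bigcup\Omega$.
\item Alternatively, I use a regular inverse $\beta\in S$ with $\alpha\beta\alpha=\alpha$. Non-injectivity of $\alpha$ on $\bigcup\Omega$, combined with its surjectivity, should force $\beta|_{\bigcup\Omega}$ to be non-surjective onto $\bigcup\Omega$. I also need $\beta\notin M$; this should hold because $X\alpha\not\subseteq$ a single class gives $X\beta$ meeting several classes, and otherwise $\beta\alpha$ would have image in one class. Then Lemma~\ref{lem: reg onto} applies to $\beta$, giving a nonregular product $\beta\gamma$ with $\gamma\in M$.
\end{enumerate}

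The main obstacle is Case 2 for infinite $\bigcup\Omega$, where surjective non-injective maps exist. The regular-inverse argument in (iii) is the one I would pursue first. It requires checking carefully that $\beta\notin M$ and that $\beta$ genuinely misses a point of $\bigcup\Omega$, by choosing the point $y\in\bigcup\Omega$ whose $\alpha$-preimage in $\bigcup\Omega$ is not $y\alpha\beta$. If that fails, I would compose $\alpha$ with elements of $M$ directly, in the spirit of Lemma~\ref{lem: reg onto}, to exhibit a nonregular product.
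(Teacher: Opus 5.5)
Your route (iii) is the paper's argument run contrapositively. The paper takes $\gamma\in S$ with $\alpha=\alpha\gamma\alpha$, notes $\gamma\notin M$, and uses Lemma~\ref{lem: reg onto} to get $\bigcup\Omega\subseteq\im(\gamma|_{\bigcup\Omega})$. From that it deduces that $\alpha|_{\bigcup\Omega}$ is injective. So your strategy is right. However, the step you leave as ``should force'' is essentially the whole content of the theorem beyond the cited lemmas, so it has to be proved.

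It follows from idempotency of $\alpha\beta$. Suppose $s\in\bigcup\Omega$ lies in $\im(\beta|_{\bigcup\Omega})$. Write $s=x\beta$ with $x\in\bigcup\Omega\subseteq X\alpha$, say $x=a\alpha$. Then $s=a\alpha\beta=a(\alpha\beta\alpha)\beta=s\alpha\beta$. Now let $s\neq t$ in $\bigcup\Omega$ with $s\alpha=t\alpha$. The common value $s\alpha\beta=t\alpha\beta$ can equal at most one of $s,t$, and the other point is therefore missing from $\im(\beta|_{\bigcup\Omega})$. You also need $\beta\notin M$: if $X\beta\subseteq B$ for some class $B$, then $X\alpha=X\alpha\beta\alpha\subseteq B\alpha$ lies in one class, forcing $\alpha\in M$. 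Since $\beta\in S\subseteq\reg(T_E(X))$, Lemma~\ref{lem: reg onto} applied to $\beta$ gives $\lambda\in M\subseteq S$ with $\beta\lambda\notin\reg(T_E(X))$. This is the desired contradiction.

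Two smaller points. First, (iii) as you state it covers only non-injectivity, not your sub-case where some $x\in\bigcup\Omega$ has $x\alpha\notin\bigcup\Omega$. That case reduces to non-injectivity: $[x\alpha]_E$ is a singleton, so $[x]_E\alpha=\{x\alpha\}$ while $|[x]_E|\geq 2$. The paper handles it the same way, after proving injectivity. Second, item (ii) is a dead end, not merely unnecessary. If $\bigcup\Omega\subseteq\im(\alpha|_{\bigcup\Omega})$, then $\alpha^2$, $\alpha\gamma$ and $\gamma\alpha$ for $\gamma\in Q$ all keep this property, because elements of $Q$ permute $\bigcup\Omega$. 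So these products can never lose surjectivity onto $\bigcup\Omega$. Item (i) is subsumed by the general argument.
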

\begin{proof}
	By Lemma~\ref{lem: reg M union Q}, $M\cup Q$ is a regular subsemigroup of $T_E(X)$. Moreover, since $E$ is neither the identity relation nor the universal relation on $X$, the semigroup $T_E(X)$ is not regular, so $M\cup Q \subseteq \reg(T_E(X)) \subsetneq T_E(X)$, which shows that $M\cup Q$ is a proper subsemigroup of $T_E(X)$.

	Let $S$ be a proper regular subsemigroup of $T_E(X)$ such that $M\cup Q\subseteq S \subsetneq T_E(X)$. To show that $S=M\cup Q$, it suffices to prove that $\alpha\in M\cup Q$ for all $\alpha\in S$. Let $\alpha\in S$. Since $S$ is regular, we have $\alpha\in\reg(T_E(X))$. Suppose $\alpha\notin M$, so $\alpha\in\reg(T_E(X))\setminus M$. If $\bigcup\Omega\setminus\im(\alpha|_{\bigcup\Omega})$ is nonempty, then Lemma~\ref{lem: reg onto} yields an element $\beta\in M\subseteq S$ such that $\alpha\beta\notin\reg(T_E(X))$, contradicting $S\subseteq\reg(T_E(X))$. Therefore, $\bigcup\Omega\subseteq\im(\alpha|_{\bigcup\Omega})$.

	Next, we show that $\alpha|_{\bigcup\Omega}$ is injective. Let $s,t\in\bigcup\Omega$ be such that $s\alpha=t\alpha$. Since $\alpha\in S$, there exists $\gamma\in S$ such that $\alpha=\alpha\gamma\alpha$. Note that $\gamma\notin M$ (otherwise $X\gamma \subseteq B$ for some $B\in X/E$, which implies $X\alpha = (X\alpha\gamma)\alpha \subseteq (X\gamma)\alpha \subseteq B\alpha \subseteq A$ for some $A\in X/E$, giving $\alpha\in M$), so $\gamma\in\reg(T_E(X))\setminus M$. If $\bigcup\Omega\setminus\im(\gamma|_{\bigcup\Omega})\neq\emptyset$, Lemma~\ref{lem: reg onto} would yield $\lambda\in M\subseteq S$ such that $\gamma\lambda\notin\reg(T_E(X))$, contradicting $\gamma\lambda\in S\subseteq\reg(T_E(X))$. Hence $\bigcup\Omega\subseteq\im(\gamma|_{\bigcup\Omega})$, which implies $s,t\in \im(\gamma|_{\bigcup\Omega})$, so $s=x\gamma$ and $t=y\gamma$ for some $x,y\in\bigcup\Omega$. Since $\bigcup\Omega\subseteq\im(\alpha|_{\bigcup\Omega})$, we have $x=a\alpha$ and $y=b\alpha$ for some $a,b\in\bigcup\Omega$. Then
	\begin{align*}
		s & = x\gamma = a\alpha\gamma = a\alpha\gamma\alpha\gamma = x\gamma\alpha\gamma = s\alpha\gamma, \\
		t & = y\gamma = b\alpha\gamma = b\alpha\gamma\alpha\gamma = y\gamma\alpha\gamma = t\alpha\gamma.
	\end{align*}
	Since $s\alpha=t\alpha$, we get $s=s\alpha\gamma=t\alpha\gamma=t$, proving that $\alpha|_{\bigcup\Omega}$ is injective.

	Finally, we show that $\bigcup\Omega=\im(\alpha|_{\bigcup\Omega})$. Suppose for contradiction that there exists an element $a\in\im(\alpha|_{\bigcup\Omega})\setminus\bigcup\Omega$. Write $a=x\alpha$ for some $x\in\bigcup\Omega$. Let $A = [a]_E$ and $B = [x]_E$. Since $a\notin\bigcup\Omega$ and $x\in\bigcup\Omega$, we have $A=\{a\}$ and $|B|\geq 2$, so $B\in\Omega$. Pick $y\in B$ with $y\neq x$. Since $x,y\in B$, we have $(x,y)\in E$. As $\alpha\in T_E(X)$, $(x\alpha,y\alpha)\in E$. But $x\alpha=a\in A=\{a\}$, so $y\alpha=a=x\alpha$. Since $x,y\in B\subseteq\bigcup\Omega$, the injectivity of $\alpha|_{\bigcup\Omega}$ implies $x=y$, a contradiction. Thus $\bigcup\Omega=\im(\alpha|_{\bigcup\Omega})$, which proves $\alpha|_{\bigcup\Omega}\in\Sym(\bigcup\Omega)$.

	Since $\alpha\in\reg(T_E(X))$ and $\alpha|_{\bigcup\Omega}\in\Sym(\bigcup\Omega)$, Remark~\ref{rem: Q characterization} implies that $\alpha\in Q$. Therefore, $\alpha\in M\cup Q$, completing the proof.
\end{proof}

\begin{remark}\label{rem: M union Q contains units}
	We observe that $M\cup Q$ contains the group of units $G_E(X)$. Indeed, for any $\alpha \in G_E(X)$, Theorem~\ref{thm: group of units} implies that $\alpha\in\Sym(X)$ and $\alpha$ maps each $E$-class $A\in X/E$ bijectively onto an $E$-class $B\in X/E$ with $|B|=|A|$. In particular, $\alpha$ maps every $E$-class in $\Omega=\{A\in X/E : |A|\geq 2\}$ bijectively onto an $E$-class in $\Omega$, so $A\alpha\in\Omega$ for all $A\in\Omega$ and $\alpha|_{\bigcup\Omega}\in\Sym(\bigcup\Omega)$. Thus $\alpha\in Q$, showing that
	\[
		G_E(X) \subseteq Q \subseteq M\cup Q.
	\]
	Therefore, $M\cup Q$ contains the entire group of units $G_E(X)$.
\end{remark}

For convenience in the subsequent discussion, we denote the two maximal regular subsemigroups constructed above by
\[
	\mathcal{M}_1 = \reg(T_{E^*}(X)) \cup L \quad \text{and} \quad \mathcal{M}_2 = M \cup Q.
\]
By Theorems~\ref{thm: Reg TEstar union L maximal} and~\ref{thm: M union Q maximal} together with Remarks~\ref{rem: Reg TEstar union L contains units} and~\ref{rem: M union Q contains units}, both $\mathcal{M}_1$ and $\mathcal{M}_2$ are maximal regular subsemigroups of $T_E(X)$ containing the group of units $G_E(X)$.

Note that the maximality of $\mathcal{M}_1$ and $\mathcal{M}_2$ holds for an arbitrary set $X$ and an arbitrary equivalence relation $E$ that is neither the identity relation nor the universal relation on $X$.

A natural question arises: \emph{Are there any other maximal regular subsemigroups of $T_E(X)$ containing $G_E(X)$?} The following theorem shows that when $X$ is finite and $E$ is uniform with two classes, $\mathcal{M}_1$ and $\mathcal{M}_2$ are the only such subsemigroups.

\begin{theorem}\label{thm: classification two blocks}
	Let $X$ be a finite set and $E$ a uniform equivalence relation on $X$ with $|X/E| = 2$ such that each equivalence class has cardinality $k \geq 2$. A subsemigroup $S$ of $T_E(X)$ is a maximal regular subsemigroup containing $G_E(X)$ if and only if
	\[
		S = \mathcal{M}_1 \quad \text{or} \quad S = \mathcal{M}_2.
	\]
\end{theorem}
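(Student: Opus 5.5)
The plan is to reduce the classification to a single absorption statement. If every regular subsemigroup of $T_E(X)$ containing $G_E(X)$ lies in $\mathcal{M}_1$ or in $\mathcal{M}_2$, then the theorem follows from Theorems~\ref{thm: Reg TEstar union L maximal} and~\ref{thm: M union Q maximal} and Remarks~\ref{rem: Reg TEstar union L contains units} and~\ref{rem: M union Q contains units}. This also needs $\mathcal{M}_1\neq\mathcal{M}_2$, which will be clear from the description below.

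Write $X/E=\{A_1,A_2\}$ with $|A_1|=|A_2|=k\ge 2$. I first describe everything explicitly in this case. Every $\alpha\in T_E(X)$ either maps both classes into a single class (call $\alpha$ \emph{collapsing}) or maps them into distinct classes. In the latter case each $A_i\alpha^{-1}$ is a class, so $\alpha\in\reg(T_{E^*}(X))$ by Lemma~\ref{lem: Reg TEstar regular}. By Theorem~\ref{thm: reg}, a collapsing $\alpha$ with image in $B$ is regular iff $X\alpha=A_j\alpha$ for some $j$, i.e.\ iff $\alpha\in M$. Hence
\[
\reg(T_E(X))=\reg(T_{E^*}(X))\cup M .
\]
Since $\Omega=X/E$, we get $Q=G_E(X)$, so $\mathcal{M}_2=M\cup G_E(X)$. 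For a collapsing $\alpha$ with image in $B$, the other class misses $X\alpha$. Thus condition~(1) of Lemma~\ref{lem: L regular subsemigroup} forces $|X\alpha|=1$ unless $|X\alpha|>1$, in which case $B\alpha^{-1}=X\notin X/E$ and condition~(2) fails. Hence $\mathcal{M}_1=\reg(T_{E^*}(X))\cup\{\text{constant maps}\}$.

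Now let $S$ be a regular subsemigroup with $G_E(X)\subseteq S$, and suppose $S\not\subseteq\mathcal{M}_1$ and $S\not\subseteq\mathcal{M}_2$. Since $S\subseteq\reg(T_E(X))$, we get two elements. First, there is a non-unit $\alpha\in S\cap\reg(T_{E^*}(X))$. Second, there is a non-constant collapsing regular $\beta\in S\cap M$, with $X\beta=A_j\beta$ for some $j$. The goal is to produce a non-regular element of $\langle G_E(X),\alpha,\beta\rangle\subseteq S$, which is a contradiction.

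Step one is to manufacture class-preserving maps that degenerate on one class. Multiplying $\alpha$ by a suitable unit gives $\alpha'$ with $A_i\alpha'\subseteq A_i$, and $\alpha'$ is non-injective on some class, say (after relabelling by units) on $A_1$. The group $G_E(X)$ contains $\Sym(A_1)\times\Sym(A_2)$. I would use the classical fact that $S_k$ together with any map of rank $k-1$ generates all singular maps of $T_k$, combined with taking a suitable power of $\alpha'$ so that it acts as an idempotent-like rank-reduced map on $A_1$. Correcting the $A_2$-component by elements of $\Sym(A_2)$, the aim is an element $h\in S$ with $A_2h=A_2$ bijectively and $h|_{A_1}$ any prescribed map $A_1\to A_1$ of rank $\le k-1$. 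Decoupling the two components is the step I expect to be the main obstacle. If full control is not available, I would settle for what is actually needed below: $A_1h$ contains one prescribed point and avoids another.

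Step two is the contradiction. Conjugating by units, assume $X\beta=A_1\beta=T$ with $|T|\ge2$. Suppose first that $A_2\beta\subsetneq T$. Pick $t\in T\setminus A_2\beta$ and $u\in A_2\beta$, and pick $p,q\in A_1$ with $p\beta=t$ and $q\beta=u$; these are distinct. Choose $h$ as above with $p\in A_1h$ and $q\notin A_1h$, and with $h|_{A_1}$ avoiding all of $u\beta^{-1}\cap A_1$, which is possible since rank $\le k-1$ is allowed. Set $\delta=h\beta$. Then $t\in A_1\delta\setminus A_2\delta$ and $u\in A_2\delta\setminus A_1\delta$, so $X\delta$ is contained in neither $A_1\delta$ nor $A_2\delta$, and $\delta$ is not regular by Theorem~\ref{thm: reg}. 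If instead $A_2\beta=T$, I would first replace $\beta$ by $h\beta$ with $h|_{A_1}$ chosen to miss a preimage of some point of $T$. If the result is regular, then $A_1(h\beta)\subsetneq T=A_2(h\beta)$, which is the previous case with the roles of the classes swapped. So $S$ contains a non-regular element, a contradiction. Hence $S\subseteq\mathcal{M}_1$ or $S\subseteq\mathcal{M}_2$, and maximality gives $S=\mathcal{M}_1$ or $S=\mathcal{M}_2$.
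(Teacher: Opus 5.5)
Your reduction to the absorption statement is fine, and so is the explicit description. That is, $\reg(T_E(X))=\reg(T_{E^*}(X))\cup M$, $\mathcal{M}_2=M\cup G_E(X)$, and $\mathcal{M}_1=\reg(T_{E^*}(X))\cup\{\text{constant maps}\}$. The choice of a non-unit $\alpha\in S\cap\reg(T_{E^*}(X))$ and a non-constant $\beta\in S\cap M$ is also correct. This matches the paper's setup.

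\textbf{The gap is Step one.} The element $h$ you want, bijective on $A_2$ with a prescribed map of rank $\le k-1$ on $A_1$, need not exist in $\langle G_E(X),\alpha\rangle$, and that is all you know lies in $S$. For example, take $A_1\alpha=\{p_1\}\subseteq A_1$ and $A_2\alpha=\{p_2\}\subseteq A_2$. This $\alpha$ is a legitimate non-unit of $\reg(T_{E^*}(X))$. In any word $g_0\alpha g_1\alpha\cdots\alpha g_m$, the first $\alpha$ already sends each class to a single point. So every non-unit element of $\langle G_E(X),\alpha\rangle$ is constant on both classes, and none is injective on $A_2$.

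The classical fact that $S_k$ together with a rank-$(k-1)$ map generates $T_k$ does not help. The restriction $\alpha'|_{A_1}$ may have rank below $k-1$, and powers only lower the rank.

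Your fallback, controlling only $A_1h$, does not rescue Step two. Both of your cases use $A_2h\beta=A_2\beta$, which requires $A_2h=A_2$. That equality is what gives $u\in A_2\delta$, $t\notin A_2\delta$, and $A_2(h\beta)=T$.

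\textbf{The missing idea is to aim for the opposite extreme, which is always attainable.} Take $\psi\in\langle G_E(X),\alpha\rangle$ of minimal rank.
\begin{itemize}
\item Suppose some $A_i\psi$ contained distinct points $c,d$. Pick $a\ne b$ in $A_1$ with $a\alpha'=b\alpha'$, and pick $\pi\in G_E(X)$ with $c\pi=a$ and $d\pi=b$. Then $\psi\pi\alpha'$ has strictly smaller rank, a contradiction.
\item Hence $\psi$ is constant on each class. Composing with a class-swapping unit if necessary, you may assume $A_1\psi=\{w_1\}\subseteq A_1$ and $A_2\psi=\{w_2\}\subseteq A_2$.
\item Since $\beta$ is non-constant, there are $u\in A_1$ and $v\in A_2$ with $u\beta\neq v\beta$. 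Choose $\sigma\in G_E(X)$ with $w_1\sigma=u$ and $w_2\sigma=v$.
\item Then $\theta=\psi\sigma\beta\in S$ has $A_1\theta=\{u\beta\}$ and $A_2\theta=\{v\beta\}$. These are two distinct points of one class, so $\theta$ is not regular by Theorem~\ref{thm: reg}.
\end{itemize}
This is the paper's argument. It also removes the need for your case split on whether $A_2\beta=T$.
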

\begin{proof}
	$(\Leftarrow)$ By Theorems~\ref{thm: Reg TEstar union L maximal} and~\ref{thm: M union Q maximal}, both $\mathcal{M}_1$ and $\mathcal{M}_2$ are maximal regular subsemigroups of $T_E(X)$. By Remarks~\ref{rem: Reg TEstar union L contains units} and~\ref{rem: M union Q contains units}, both $\mathcal{M}_1$ and $\mathcal{M}_2$ contain $G_E(X)$. Hence each of $\mathcal{M}_1$ and $\mathcal{M}_2$ is a maximal regular subsemigroup of $T_E(X)$ containing $G_E(X)$.

	$(\Rightarrow)$ Let $S$ be a maximal regular subsemigroup of $T_E(X)$ such that $G_E(X) \subseteq S$. Since $|A_1| = |A_2| \geq 2$, every equivalence class belongs to $\Omega$, so $\Omega = X/E$ and $\bigcup\Omega = X$. It follows from the definition of $Q$ and Theorem~\ref{thm: group of units} that $Q = G_E(X)$, and thus $\mathcal{M}_2 = M \cup G_E(X)$. We first show that
	\[
		\reg(T_E(X)) = \mathcal{M}_1 \cup \mathcal{M}_2.
	\]
	Since $\mathcal{M}_1 \subseteq \reg(T_E(X))$ and $\mathcal{M}_2 \subseteq \reg(T_E(X))$, the inclusion $\mathcal{M}_1 \cup \mathcal{M}_2 \subseteq \reg(T_E(X))$ is clear.
	Conversely, let $\alpha \in \reg(T_E(X))$, and let $X/E = \{A_1, A_2\}$.
	If the image of $\alpha$ is contained in a single $E$-class, then $X\alpha \subseteq A$ for some $A \in X/E$, so $\alpha \in M \subseteq \mathcal{M}_2$ by definition of $M$.
	Otherwise, the image $X\alpha$ meets both classes $A_1$ and $A_2$.
	Since each domain equivalence class maps into a single $E$-class, one equivalence class must map into $A_1$ and the other into $A_2$.
	Thus $\alpha$ induces a bijection on $X/E$, which means $\alpha \in T_{E^*}(X)$.
	Since $|X/E| = 2$ is finite, by Deng, Zeng, and Xu \cite{Deng2010}*{Theorem 3.2}, $T_{E^*}(X)$ is regular, so $\alpha \in \reg(T_{E^*}(X)) \subseteq \mathcal{M}_1$.
	Therefore, every element of $\reg(T_E(X))$ belongs to $\mathcal{M}_1 \cup \mathcal{M}_2$, which establishes that $\reg(T_E(X)) = \mathcal{M}_1 \cup \mathcal{M}_2$.
	Since $S \subseteq \reg(T_E(X))$, we obtain
	\[
		S \subseteq \mathcal{M}_1 \cup \mathcal{M}_2.
	\]
	We claim that
	\[
		S \subseteq \mathcal{M}_1 \quad \text{or} \quad S \subseteq \mathcal{M}_2.
	\]
	Suppose for contradiction that $S \not\subseteq \mathcal{M}_1$ and $S \not\subseteq \mathcal{M}_2$.
	Then there exist an element $\alpha_1 \in S \cap (\mathcal{M}_1 \setminus \mathcal{M}_2)$ and an element $\alpha_2 \in S \cap (\mathcal{M}_2 \setminus \mathcal{M}_1)$.

	We first analyze $\alpha_2 \in \mathcal{M}_2 \setminus \mathcal{M}_1$.
	Since $\mathcal{M}_2 = M \cup G_E(X)$ and $G_E(X) \subseteq \mathcal{M}_1$, we have $\alpha_2 \in M \setminus \mathcal{M}_1$.
	By definition of $M$, $\alpha_2 \in \reg(T_E(X))$ and $X\alpha_2 \subseteq A_0$ for some $A_0 \in \{A_1, A_2\}$.
	If $|X\alpha_2| = 1$, then $|A_0 \cap X\alpha_2| = 1$, so condition~(1) of Lemma~\ref{lem: L regular subsemigroup} holds, and condition~(2) holds trivially, which implies $\alpha_2 \in L \subseteq \mathcal{M}_1$, a contradiction.
	Thus $|X\alpha_2| \geq 2$.
	Since $X = A_1 \cup A_2$, there exist $u \in A_1$ and $v \in A_2$ such that $u\alpha_2 \neq v\alpha_2$.
	Set $x = u\alpha_2$ and $y = v\alpha_2$, so that $x, y \in A_0$ with $x \neq y$.

	We next analyze $\alpha_1 \in \mathcal{M}_1 \setminus \mathcal{M}_2$.
	Since $\alpha_1 \notin \mathcal{M}_2 = M \cup G_E(X)$, the image of $\alpha_1$ is not contained in a single $E$-class, so $X\alpha_1$ meets both $A_1$ and $A_2$, and $\alpha_1 \notin G_E(X)$.
	Because $X/E = \{A_1, A_2\}$, the condition that $X\alpha_1$ meets both classes implies that $\alpha_1$ maps $A_1$ into one class and $A_2$ into the other, so $\alpha_1 \in \reg(T_{E^*}(X))$ induces a permutation on $\{A_1, A_2\}$.
	Since $\alpha_1 \notin G_E(X)$ and $|A_1| = |A_2| = k$, $\alpha_1$ cannot be injective on every equivalence class.
	Without loss of generality, we may assume that $\alpha_1$ is not injective on $A_1$, so there exist distinct elements $a, b \in A_1$ such that $a\alpha_1 = b\alpha_1$.
	Choose a permutation $\rho \in G_E(X)$ swapping $A_1$ and $A_2$ if $A_1\alpha_1 \subseteq A_1$ (or setting $\rho = \id_X$ if $A_1\alpha_1 \subseteq A_2$), so that the composite $\phi = \alpha_1\rho \in \langle G_E(X), \alpha_1 \rangle \subseteq S$ satisfies $A_1\phi \subseteq A_2$ and $A_2\phi \subseteq A_1$, while still satisfying $a\phi = b\phi$.
	Among all transformations in $\langle G_E(X), \alpha_1 \rangle$, choose an element $\psi$ that minimizes the image size
	\[
		|X\psi| = |A_1\psi| + |A_2\psi|.
	\]
	We claim that $|A_1\psi| = 1$ and $|A_2\psi| = 1$.
	Indeed, suppose for contradiction that $|A_i\psi| \geq 2$ for some $i \in \{1, 2\}$.
	Then there exist distinct elements $c, d \in A_i\psi$.
	Since $\psi \in T_E(X)$, the image $A_i\psi$ is contained in a single equivalence class, so $c$ and $d$ belong to the same $E$-class (either $A_1$ or $A_2$).
	Because $|A_1| = |A_2| \geq 2$, there exists a permutation $\pi \in G_E(X)$ such that $c\pi = a$ and $d\pi = b$ (specifically, $\pi$ maps each class into itself if $c, d \in A_1$, and $\pi$ swaps $A_1$ and $A_2$ if $c, d \in A_2$).
	Now consider the transformation $\psi' = \psi\pi\phi \in \langle G_E(X), \alpha_1 \rangle$.
	Since $a\phi = b\phi$, we have
	\[
		c(\pi\phi) = a\phi = b\phi = d(\pi\phi).
	\]
	Thus $\pi\phi$ identifies the two distinct elements $c, d \in X\psi$, which implies that
	\[
		|X\psi'| < |X\psi|,
	\]
	contradicting the minimality of $|X\psi|$.
	Hence $|A_1\psi| = 1$ and $|A_2\psi| = 1$.
	Since $\psi$ induces a permutation on $\{A_1, A_2\}$, the singletons $A_1\psi = \{w_1\}$ and $A_2\psi = \{w_2\}$ lie in distinct equivalence classes.
	Replacing $\psi$ by $\psi^2$ if necessary (which induces the identity permutation on $X/E$), we may assume that $w_1 \in A_1$ and $w_2 \in A_2$.

	Now choose a permutation $\sigma \in G_E(X)$ such that $w_1\sigma = u \in A_1$ and $w_2\sigma = v \in A_2$.
	Consider the transformation
	\[
		\theta = \psi\sigma\alpha_2 \in \langle G_E(X), \alpha_1, \alpha_2 \rangle \subseteq S.
	\]
	Since $X\alpha_2 \subseteq A_0$, we have $X\theta \subseteq A_0$, so the image of $\theta$ is contained in a single $E$-class.
	Since $A_1\psi = \{w_1\}$ and $A_2\psi = \{w_2\}$, for any $t_1 \in A_1$ and $t_2 \in A_2$, we have
	\begin{align*}
		t_1\theta & = (t_1\psi)\sigma\alpha_2 = (w_1\sigma)\alpha_2 = u\alpha_2 = x, \\
		t_2\theta & = (t_2\psi)\sigma\alpha_2 = (w_2\sigma)\alpha_2 = v\alpha_2 = y.
	\end{align*}
	This implies that $A_1\theta = \{x\}$ and $A_2\theta = \{y\}$, so that
	\[
		X\theta = A_1\theta \cup A_2\theta = \{x, y\}.
	\]
	Since $x \neq y$, we have $|X\theta| = 2$, while $|A_1\theta| = 1$ and $|A_2\theta| = 1$.
	Consequently, no domain equivalence class $A_i \in X/E$ can satisfy $X\theta \subseteq A_i\theta$, because $|A_i\theta| = 1 < 2 = |X\theta|$.
	By Theorem~\ref{thm: reg}, $\theta \notin \reg(T_E(X))$.
	Since $\theta \in S$, this contradicts $S \subseteq \reg(T_E(X))$.
	This contradiction establishes the claim, so that
	\[
		S \subseteq \mathcal{M}_1 \quad \text{or} \quad S \subseteq \mathcal{M}_2.
	\]
	Since $\mathcal{M}_1$ and $\mathcal{M}_2$ are regular subsemigroups of $T_E(X)$ containing $G_E(X)$, the maximality of $S$ as a regular subsemigroup containing $G_E(X)$ implies that
	\[
		S = \mathcal{M}_1 \quad \text{or} \quad S = \mathcal{M}_2.
	\]
	This completes the proof.
\end{proof}

\subsection*{Open Problems}
In this paper, we have completely determined all maximal regular subsemigroups of $T_E(X)$ containing the group of units $G_E(X)$ for the case where $X$ is finite and $E$ is a uniform equivalence relation with two classes of cardinality at least $2$. In light of these results, several natural questions and directions for future research arise:
\begin{enumerate}[label=(\arabic*)]
	\item \emph{Uniform equivalence relations with more than two classes.}
	      Let $X$ be a finite set and $E$ a uniform equivalence relation on $X$ with $|X/E| \geq 3$ such that each equivalence class has cardinality at least $2$. Are $\mathcal{M}_1$ and $\mathcal{M}_2$ the only maximal regular subsemigroups of $T_E(X)$ containing $G_E(X)$, or do other maximal regular subsemigroups appear?

	\item \emph{Non-uniform equivalence relations.}
	      When $X$ is finite and $X/E = \{A_1, A_2\}$ with $|A_1| \neq |A_2|$ and $\min(|A_1|, |A_2|) \geq 2$, the group of units $G_E(X)$ cannot interchange the two classes. What is the complete classification of all maximal regular subsemigroups of $T_E(X)$ containing $G_E(X)$ in this setting?

	\item \emph{Infinite sets and infinite families of classes.}
	      Characterize the maximal regular subsemigroups of $T_E(X)$ containing $G_E(X)$ when the underlying set $X$ is infinite (with either finitely or infinitely many equivalence classes).
\end{enumerate}

\subsection*{Acknowledgments} This research was supported by Chiang Mai University.

\subsection*{Conflict of interest} The author declares no conflict of interest.

\bibliography{References}

\end{document}